\numberwithin{equation}{section}
\theoremstyle{plain}
\newtheorem{lemma}[equation]{Lemma}
\newtheorem{theorem}[equation]{Theorem}
\newtheorem{proposition}[equation]{Proposition}
\theoremstyle{definition}
\theoremstyle{remark} 
\newtheorem{remark}[equation]{Remark}
\newcommand{\End}{\mathsf{End}}
\newcommand{\ep}{\varepsilon}
\newcommand{\Ext}{\mathsf{Ext}}
\renewcommand{\le}{\leqslant}
\renewcommand{\ge}{\geqslant}
\newcommand{\Rad}{\mathsf{Rad}}
\newcommand{\Soc}{\mathsf{Soc}}
\newcommand{\bZ}{\mathbb Z}
\newcommand{\PSL}{\mathrm{PSL}}
\newcommand{\PGL}{\mathrm{PGL}}
\newcommand{\Aut}{\mathrm{Aut}}
\newcommand{\Sz}{\mathrm{Sz}}
\newcommand{\GL}{\mathrm{GL}}
\newcommand{\TT}{\mathrm{t}}
\newcommand{\Irr}{\mathrm{Irr}}
\newcommand{\IBr}{\mathrm{IBr}}
\renewcommand{\phi}{\varphi}
\author{Dave Benson} 
\address{Institute of Mathematics \\ 
University of Aberdeen \\ 
Aberdeen AB24 3UE \\ 
United Kingdom}
\author{Benjamin Sambale}
\address{Institut f\"ur Algebra, Zahlentheorie und Diskrete Mathematik\\
Leibniz Universit\"at Hannover\\
Welfengarten 1\\ 
30167 Hannover, Germany}
\title{Finite dimensional algebras not arising as blocks\\ of group algebras}
\subjclass[2010]{Primary: 20C20. Secondary: 16G70, 20C05}
\keywords{Block theory, Cartan matrix, basic algebra, Auslander--Reiten theory}
\begin{document}
\frenchspacing

\begin{abstract}
We develop new techniques to classify basic algebras of blocks of
finite groups over algebraically closed fields of prime
characteristic. 
We apply these techniques to simplify and extend previous
classifications by Linckelmann, Murphy and Sambale. In particular, we fully classify blocks with
$16$-dimensional basic algebra. 
\end{abstract}

\maketitle
\renewcommand{\sectionautorefname}{Section}
\section{Introduction}

Linckelmann~\cite{Linckelmann:2018b} instigated the study of small dimensional
symmetric basic algebras over an algebraically closed field of prime characteristic,
in the context of enumerating which of them could be 
the basic algebra of a block of a finite group.
In that paper, he gave a complete classification up to dimension twelve, 
except for one case of an algebra of dimension nine; see Section~2.9 of that 
paper. The paper of Linckelmann and Murphy~\cite{Linckelmann/Murphy:2021a}
eliminated that $9$-dimensional 
algebra using some fairly sophisticated group representation
theory. We provide a proof of this elimination that is completely different
from the one in that paper, by examining the Auslander--Reiten quiver.  

The second author~\cite{Sambale:2021a} took Linckelmann's methods further, up
to dimension fourteen. We provide alternative proofs for some of the difficult
cases that occurred in that paper. By a recent paper of Macgregor~\cite{Macgregor:2022a}, it became clear that the classification of tame basic algebras in dimension $14$ might be incomplete. Hence, the list given in \cite{Sambale:2021a} might be incomplete as well. We comment on the details in \autoref{sec:small} below. 

By way of preparation, we prove some theorems that dispose of a number of
possible Cartan matrices. The most interesting of these is the
following theorem, whose proof can be found in \autoref{se:Cartan}.

\begin{theorem}\label{th:310}
Suppose that $A$ is a finite dimensional indecomposable symmetric
algebra over an algebraically closed field. Suppose that $A$ 
has a simple module $S$ whose Cartan invariant $c_{S,S}$ is
equal to $3$ and all the other $c_{S,T}$ are either one or zero. 
Then $A$ is not Morita equivalent to a block
of wild representation type of a finite group algebra in prime characteristic.
\end{theorem}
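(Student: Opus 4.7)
The plan is a proof by contradiction. Suppose that $A$ is Morita equivalent to a wild block $B$ of some finite group algebra $kG$, and work inside $B$.

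The crucial tool is a theorem of Erdmann stating that every connected component of the stable Auslander--Reiten quiver of a wild block of a finite group algebra has tree class $A_\infty$. Thus each vertex of the stable AR quiver has at most two neighbours, so for any non-projective indecomposable module $M$ the middle term of the almost-split sequence ending in $M$ has at most two non-isomorphic indecomposable summands. The simple module $S$ is non-projective (a projective simple would have $c_{S,S}=1$) and so belongs to such a component. Thus the almost-split sequence
\[
0\to\Omega^2 S\to E\to S\to 0
\]
has at most two non-isomorphic summands in $E$; I plan to derive a contradiction by exhibiting at least three.

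For this, I analyse $P_S$. Since $A$ is symmetric and $c_{S,S}=3$, the top and socle of $P_S$ are both isomorphic to $S$, and the heart $H=\Rad(P_S)/\Soc(P_S)$ contains exactly one further composition factor isomorphic to $S$. Every other composition factor of $H$ is a simple $T\ne S$ appearing with multiplicity exactly one. The three copies of $S$ in the Loewy structure of $P_S$ give rise to three ``routes'' into $S$ in the AR quiver, which should produce three pairwise non-isomorphic irreducible predecessors of $S$.

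The argument is organised by cases on $e:=\dim\Ext^1(S,S)$, the number of loops at $S$ in the Ext quiver. If $e=0$, then $S$ does not occur in $\Rad(P_S)/\Rad^2(P_S)$, so the middle copy of $S$ in $P_S$ sits strictly inside the radical filtration; the three predecessors then come from (i) a quotient of $P_S$ built from the top $S$ together with the interior $S$, (ii) a submodule of $P_S$ built from the interior $S$ together with the socle $S$, and (iii) a further indecomposable module built using the interior copy of $S$ (obtained by Heller-translating a suitable piece of $H$). If $e\ge 1$, then $S$ is adjacent to both top and socle in the radical series of $P_S$, and the three predecessors arise from the three successive occurrences of $S$ in that series. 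In every case the bound $c_{S,T}\le 1$ for $T\ne S$ is used to ensure that the three predecessors are pairwise non-isomorphic: any identification would force some simple $T\ne S$ to appear with multiplicity at least two in $P_S$, contradicting the hypothesis.

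The main obstacle is the rigorous construction of the third predecessor when $e=0$, since the submodule lattice of $P_S$ in the interior layers is not determined by the Cartan data alone. Handling this requires a careful analysis of $\Omega^2 S$ via the projective cover of the top of $\Rad(P_S)$, together with the symmetry of $A$ (in particular, the identification $\Omega^{-1}S\cong P_S/\Soc(P_S)$) to transfer information between the ``top half'' and the ``bottom half'' of $P_S$.
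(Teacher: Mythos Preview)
Your plan has a genuine gap: the three predecessors you hope to construct do not exist. In fact the case $e=0$ is vacuous. Since $A$ is symmetric, $\End_A(P_S)\cong eAe$ is itself a symmetric local algebra of dimension $c_{S,S}=3$, hence isomorphic to $k[x]/(x^3)$; the element $x$ gives a nilpotent endomorphism $\phi$ of $P_S$ with $\phi^2\ne 0$, so $\mathrm{im}(\phi)\subseteq\Rad(P_S)$ but $\mathrm{im}(\phi)\not\subseteq\Soc(P_S)$. Every composition factor $T\ne S$ occurs once in $P_S$ and is therefore killed by $\phi$, so $\mathrm{im}(\phi)\cong P_S/\ker\phi$ is a module of length at least two all of whose composition factors are $S$. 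Hence $\Ext^1(S,S)\ne 0$, and since $[H:S]=1$ we get $e=1$. Consequently $S$ is a direct summand of the heart $H=\Rad(P_S)/\Soc(P_S)$, say $H=S\oplus X$.

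Now the almost split sequence ending in $\Omega^{-1}S=P_S/\Soc(P_S)$ is
\[
0\to \Omega S \to P_S\oplus S\oplus X \to \Omega^{-1}S\to 0,
\]
and applying $\Omega$ (an autoequivalence of the stable category for a symmetric algebra) gives the almost split sequence ending in $S$ with middle term $\Omega S\oplus\Omega X$ modulo projectives. Thus $S$ has \emph{exactly two} non-projective predecessors, $\Omega S$ and $\Omega X$, so your strategy of exhibiting three cannot succeed; Erdmann's $A_\infty$ bound is simply not violated at the vertex $S$ (nor at $\Omega^{-1}S$, once one knows from the same $A_\infty$ bound that $X$ is indecomposable).

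The paper's argument uses a different, and essential, idea. With $H=S\oplus X$, the stable component containing $S$ has the shape
\[
\cdots\ [\Omega X]\ -\ [S]\ -\ [\Omega S]\ -\ [X]\ -\ \cdots
\]
in its tree class, and the syzygy $\Omega$ acts on this component as an automorphism whose square is the translation $\tau=\Omega^2$ and which interchanges $[S]\leftrightarrow[\Omega S]$ and $[X]\leftrightarrow[\Omega X]$; in other words, $\Omega$ is a glide reflection about the edge $[S]-[\Omega S]$. But $A_\infty$ is a one-ended tree with trivial automorphism group, so no component of tree class $A_\infty$ (or tube) admits such a glide reflection. This contradicts Erdmann's theorem and finishes the proof. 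The missing ingredient in your proposal is precisely this $\Omega$-symmetry argument; counting predecessors alone is not enough.
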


It has been observed in \cite{Sambale:2021a} that in dimension $15$
there might be a $13$-block of defect $1$, which is not known to
exist. Leaving this open case aside, we show that there is only one more block in this dimension.

\begin{theorem}\label{th:dim15}
Let $B$ be a block of a finite group with defect group $D$ and basic
algebra $A$ of dimension $15$. Then one of the following holds:
\begin{enumerate}[(1)]
\item $D\cong C_{19}$ and $A$ is Morita equivalent to the principal $19$-block of $\GL(3,7)$. 
\item $D\cong C_{13}$ and $A$ is a Brauer tree algebra with Cartan matrix   
\[\begin{pmatrix}
5&1&1\\1&2&1\\1&1&2
\end{pmatrix}.\] 
\end{enumerate}
\end{theorem}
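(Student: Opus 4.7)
The plan is to enumerate the finite list of possible Cartan matrices of $B$ and, for each, either identify the block or rule it out by combining combinatorial constraints with \autoref{th:310}. Let $\ell = \ell(B)$ denote the number of simple $B$-modules and $C = (c_{ST})$ the Cartan matrix. Since $A$ is basic and symmetric, $\dim A = \sum_{S,T} c_{ST} = 15$; since $B$ is a block with defect group $D$, $\det C = |D|$ is a power of the characteristic $p$. If $\ell = 1$ then $C = (15)$ and $\det C = 15$ is not a prime power, a contradiction. For $\ell \ge 2$, the indecomposability of $A$ forces $c_{SS} \ge 2$ for every simple $S$, and the connectedness of the Ext-quiver of $A$ forces $\sum_{S \ne T} c_{ST} \ge 2(\ell - 1)$; these combine to give $15 \ge 4\ell - 2$, so $\ell \in \{2, 3, 4\}$.

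For each $\ell$ I then enumerate the symmetric positive-definite integer matrices $C$ with these constraints and $\det C$ a prime power. The case $\ell = 4$ is especially tight: the diagonal must be a permutation of $(3,2,2,2)$ and the off-diagonals must encode a spanning tree on four vertices (a path or a star $K_{1,3}$) with all entries equal to $1$, giving only four essentially different Cartan matrices with determinants $8$, $9$, $11$, and $12$. Discarding $12$ as non-prime-power, each remaining Cartan has a simple $S$ with $c_{SS} = 3$ and all other $c_{ST} \in \{0,1\}$, so \autoref{th:310} forbids the block from being of wild representation type. A cyclic defect group $C_{p^n}$ would force the inertial index $e = 4$ to divide $p - 1$, which fails for all three determinants; for the possible non-cyclic defects ($C_3 \times C_3$ if $|D| = 9$; $C_2^3$, $C_4 \times C_2$, $D_8$, or $Q_8$ if $|D| = 8$) either the block is wild (contradicting \autoref{th:310}) or Erdmann's classification of tame blocks gives $\ell \le 3$. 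Hence $\ell \le 3$.

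For $\ell \in \{2, 3\}$ the same enumeration yields a finite list of candidate Cartan matrices. For each, $|D| = \det C$ is a prime power, and when $|D|$ is prime the defect group is cyclic, so the block must be a Brauer tree algebra with $e = \ell$ edges, exceptional multiplicity $m = (|D| - 1)/e$, and $e \mid p - 1$. For each tree topology on $e = 2$ or $3$ edges (a path on $2$ edges; a path or a star on $3$ edges), the sum of Cartan entries is an explicit linear function of $m$, which can be matched against the candidates. Leaving aside the open case noted in the statement, exactly two survivors remain: the three-edge star tree with exceptional vertex of multiplicity $4$ at a leaf, giving case (2) with $D \cong C_{13}$; and the three-edge path tree with exceptional vertex of multiplicity $6$ at one end, giving case (1) with $D \cong C_{19}$. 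I would then identify case (1) with the principal $19$-block of $\GL(3, 7)$, where a Sylow $19$-subgroup is cyclic of order $19$ with normalizer of order $57$, producing inertial index $3$ and multiplicity $6$ in agreement with the tree. The main obstacle is the systematic elimination of the remaining candidates with prime-power determinant but no realised Brauer tree (determinants such as $5$, $7$, $8$, $9$, $11$, $17$, $27$, $29$, $31$, $37$, $41$ all arise): each must be excluded either combinatorially (no compatible $(e, m)$, or no Brauer tree sum equal to $15$) or by invoking classification results for blocks with a non-cyclic defect group of small order, most delicately $C_3 \times C_3$ in the case $\det C = 9$.
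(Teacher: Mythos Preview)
Your overall strategy matches the paper's proof: bound $\ell(B)$, enumerate candidate Cartan matrices, and eliminate the spurious ones using \autoref{th:310} together with the structure theory of blocks of small defect. Your treatment of $\ell=4$ is correct and slightly more explicit than the paper's (the paper folds the prime-determinant cases into an earlier step and then disposes of the two remaining matrices via \autoref{prop8} and \autoref{th:310}).

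There is, however, one genuine error. You write ``$|D|=\det C$'', but this is false in general: $\det C$ is a $p$-power, while $|D|$ is only the \emph{largest elementary divisor} of $C$, and it occurs with multiplicity one. This matters precisely for the $\ell=2$ matrix $\bigl(\begin{smallmatrix}6&3\\3&3\end{smallmatrix}\bigr)$, which has elementary divisors $3,3$. Since the largest elementary divisor would have multiplicity two, this matrix cannot be the Cartan matrix of any block; this is exactly how the paper eliminates it in one line. Your proposed route through the classification of blocks with defect $C_3\times C_3$ is much heavier (one would have to compute the Cartan matrices for the two possible $C_2$-actions and observe that both give basic-algebra dimension $18$, not $15$), and in any case rests on the false premise $|D|=9$.

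The $\ell=3$ part of your plan is also left too vague. After discarding the prime-determinant cases (which give the two survivors), the paper finds exactly two further candidates, $\bigl(\begin{smallmatrix}4&0&2\\0&3&1\\2&1&2\end{smallmatrix}\bigr)$ with $|D|=8$ and $\bigl(\begin{smallmatrix}6&0&1\\0&3&1\\1&1&2\end{smallmatrix}\bigr)$ with $|D|=27$. The first is killed by the explicit list in \autoref{prop8}; the second by \autoref{th:310} (forcing the block to be non-wild, hence $D$ cyclic, hence $\ell\le p-1=2$). Your plan gestures at ``classification results for blocks with a non-cyclic defect group of small order'' without naming which results or checking that they suffice; in particular, for the $|D|=8$ matrix your $\ell=4$ argument (``tame gives $\ell\le 3$'') no longer helps since $\ell=3$ is compatible with tame, and one really must compare against Erdmann's Cartan lists as in \autoref{prop8}.
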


Finally, we extend the classification of basic algebras to dimension
$16$ as follows. 

\begin{theorem}\label{th:dim16}
Let $B$ be a block of a finite group with defect group $D$ and basic
algebra $A$ of dimension $16$.  
Then one of the following holds: 
\begin{enumerate}[\rm (1)]
\item $|D|=16$ and $A$ is isomorphic to the group algebra of $D$.\smallskip
\item $D\cong C_2^4$ and $A$ is Morita equivalent to a non-principal
  block of $H\cong D\rtimes 3^{1+2}_+$ with $H/Z(H)\cong A_4^2$.\smallskip
\item $D\cong C_{23}$ and $A$ is Morita equivalent to the principal
  $23$-block of $\PSL(2,137)$.\smallskip
\item $D\cong C_5$ and $A$ is Morita equivalent to the principal
  $5$-block of $S_5$ or $\Sz(8)$.\smallskip
\item $D\cong C_{13}$ and $A$ is Morita equivalent to the principal
  $13$-block of $\GL(4,5)$.\smallskip
\item $D\cong D_8$ and $A$ is Morita equivalent to the principal
  $2$-block of $\GL(3,2)$.  
\end{enumerate}
In total there are 20 Morita equivalence classes. 
\end{theorem}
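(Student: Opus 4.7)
Since $A$ is basic, all simple $A$-modules are one-dimensional, so $\dim_k A$ equals the sum of the entries of the Cartan matrix $C$, and $\det C = |D|$ is a prime power. The plan is to enumerate the finitely many symmetric positive integer matrices $C$ with entries summing to $16$ and prime-power determinant, organized by the number $\ell$ of simple $B$-modules, and then to decide which candidates are realized by a block.

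When $\ell = 1$ the only option is $C = (16)$, giving $|D| = 16$ and a nilpotent block; Puig's theorem then yields $A \cong kD$, and the $14$ isomorphism classes of groups of order $16$ contribute the $14$ Morita classes of item (1). For $\ell \ge 2$, every diagonal entry of $C$ must be at least $2$ (otherwise the corresponding simple would be projective and split off the block), which makes the candidate list short. Those with $|D|$ prime correspond to cyclic defect groups, hence to Brauer tree algebras: from $C$ one reads off the tree and exceptional multiplicity via $em + 1 = |D|$, and Feit's classification of Brauer trees arising from finite groups identifies the realizing block, giving items (3), (4) and (5) (with both $S_5$ and $\Sz(8)$ contributing two Morita classes in case (4)). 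For $D \cong D_8$, Erdmann's classification of blocks of dihedral defect, combined with the Cartan constraints, leaves only the principal $2$-block of $\GL(3,2)$ and yields item (6); for $D \cong C_2^4$ the arithmetic of the inertial quotient forces the extension by $3^{1+2}_+$ with $H/Z(H) \cong A_4^2$, producing item (2).

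The main obstacle is ruling out the many numerically admissible Cartan matrices that are not realized by any block, particularly in the wild range. Here \autoref{th:310} is the principal new ingredient: it eliminates every candidate in which some simple $S$ has $c_{S,S} = 3$ with all other Cartan entries in $\{0,1\}$. The remaining candidates are dispatched by the standard reductions, namely Auslander--Reiten theoretic arguments together with the methods developed in \cite{Linckelmann:2018b} and \cite{Sambale:2021a}. Once every candidate has been handled, tallying the surviving classes gives $14 + 1 + 1 + 2 + 1 + 1 = 20$ Morita classes, as claimed.
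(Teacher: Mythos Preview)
Your outline has the right overall shape but contains factual errors and skips the hardest part of the argument. First, $\det C$ is only a $p$-power, not $|D|$ itself: the largest elementary divisor of $C$ equals $|D|$ and occurs once. For instance, $C=\bigl(\begin{smallmatrix}6&2\\2&6\end{smallmatrix}\bigr)$ has determinant $32$ but elementary divisors $2,16$, so $|D|=16$; your criterion would miss this candidate entirely. Second, $l(B)=1$ does \emph{not} force $B$ to be nilpotent. The block in item~(2) has $l(B)=1$ and $C=(16)$ but is non-nilpotent; the paper uses Eaton's classification to pin it down, obtaining $15$ (not $14$) classes with $l(B)=1$. Placing item~(2) under $l(B)\ge 2$ is simply wrong.

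More seriously, the phrase ``the remaining candidates are dispatched by the standard reductions'' hides the genuine difficulty. For $l(B)=2$, after clearing the easy matrices one is left with $\bigl(\begin{smallmatrix}5&3\\3&5\end{smallmatrix}\bigr)$ and $\bigl(\begin{smallmatrix}6&2\\2&6\end{smallmatrix}\bigr)$, neither of which is touched by \autoref{th:310}. Eliminating the first requires computing decomposition matrices, using Brauer's height-zero conjecture and the Alperin--McKay conjecture to force $D$ of maximal class, contradicting non-tameness. Eliminating the second is the core of the whole proof: one must show that a block with $D\cong D_8\times C_2$ and $l(B)=2$ is perfectly isometric to the principal block of $S_4\times C_2$ (this is \autoref{perfectiso}, a page-long computation with generalised decomposition numbers and contribution matrices), identify the centre $Z(A)$ explicitly via MAGMA, and then argue about Gorenstein quotients of $Z(A)$ to reach a contradiction. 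None of this is ``standard'' or covered by the references you cite; it is new work specific to this paper. Similarly, for $l(B)=3$ the matrix $\bigl(\begin{smallmatrix}6&1&1\\1&2&1\\1&1&2\end{smallmatrix}\bigr)$ with $|D|=16$ needs decomposition-matrix analysis and Eaton's classification, not just \autoref{th:310}.
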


\section{Preliminaries}

Throughout, we work with a finite group $G$ over an algebraically 
closed field $k$ of characteristic $p$ dividing $|G|$. We fix a block
$B$ of $kG$ with defect group $D$. Then the basic algebra $A$ of $B$
is a finite dimensional symmetric algebra. Recall that $A$ and $B$
have isomorphic centres.  
The dimension of this centre coincides with the number $k(B)$ of
irreducible characters in $B$. The number of simple modules of $B$
(and $A$) is denoted by $l(B)$.  
The dimension of $A$ itself is the sum of the entries of the Cartan matrix $C$
of $B$. The determinant of $C$ is a power of $p$, which
severely restricts the possibilities. The largest elementary divisor
of $C$ is $|D|$ and it occurs with multiplicity one. If $\det(C)=p$,
we conclude that $D$ is cyclic of order $p$. In this case, $A$ is a
Brauer tree algebra and of finite representation type. 
This further limits the possibilities for $C$. We extend Proposition~2
in \cite{Sambale:2021a} as follows (the proof is the same). 

\begin{proposition}\label{propfinite}
Let $B$ be a block with cyclic defect group $D$, $l(B)=4$ and
multiplicity $m:=\frac{|D|-1}{4}$. Then the possible Brauer trees are
given as follows: 
\begin{enumerate}[\rm (1)]
\item \begin{align*}
\begin{tikzpicture}[thick,baseline=(a.center)]
\node[draw,circle,fill,scale=.5] (a) {};
\node[draw,right of=a,circle,scale=.5] (b) {};
\node[draw,above of=a,circle,scale=.5] (c) {};
\node[draw,below of=a,circle,scale=.5] (d) {};
\node[draw,left of=a,circle,scale=.5] (e) {};
\draw (b)--(a)--(c) (d)--(a)--(e);
\end{tikzpicture}&&
C=\begin{pmatrix}
m+1&m&m&m\\m&m+1&m&m\\m&m&m+1&m\\m&m&m&m+1
\end{pmatrix}&&
\dim A=16m+4=4|D|
\end{align*}
This occurs for $B=k[D\rtimes C_4]$ provided $4\mid p-1$. 

\item \begin{align*}
\begin{tikzpicture}[thick,baseline=(a.center)]
\node[draw,circle,scale=.5] (a) {};
\node[draw,right of=a,circle,scale=.5] (b) {};
\node[draw,above of=a,circle,scale=.5] (c) {};
\node[draw,below of=a,circle,scale=.5] (d) {};
\node[draw,left of=a,fill,circle,scale=.5] (e) {};
\draw (b)--(a)--(c) (d)--(a)--(e);
\end{tikzpicture}&&
C=\begin{pmatrix}
m+1&1&1&1\\1&2&1&1\\1&1&2&1\\1&1&1&2
\end{pmatrix}&&
\dim A=m+19=\frac{|D|+75}{4}
\end{align*}

\item \begin{align*}
\begin{tikzpicture}[thick,baseline=(a.center)]
\node[draw,fill,circle,scale=.5] (a) {};
\node[draw,above of=a,circle,scale=.5] (c) {};
\node[draw,below of=a,circle,scale=.5] (d) {};
\node[draw,right of=a,circle,scale=.5] (e) {};
\node[draw,right of=e,circle,scale=.5] (f) {};
\draw (c)--(a)--(d) (a)--(e)--(f);
\end{tikzpicture}&&
C=\begin{pmatrix}
m+1&m&m&.\\m&m+1&m&.\\m&m&m+1&1\\.&.&1&2
\end{pmatrix}&&
\dim A=9m+7=\frac{9|D|+19}{4}
\end{align*}

\item \begin{align*}
\begin{tikzpicture}[thick,baseline=(a.center)]
\node[draw,circle,scale=.5] (a) {};
\node[draw,above of=a,circle,scale=.5] (c) {};
\node[draw,below of=a,circle,scale=.5] (d) {};
\node[draw,fill,right of=a,circle,scale=.5] (e) {};
\node[draw,right of=e,circle,scale=.5] (f) {};
\draw (c)--(a)--(d) (a)--(e)--(f);
\end{tikzpicture}&&
C=\begin{pmatrix}
m+1&m&1&.\\m&m+1&1&.\\1&1&2&1\\.&.&1&2
\end{pmatrix}&&
\dim A=4m+16=|D|+15
\end{align*}

\item \begin{align*}
\begin{tikzpicture}[thick,baseline=(a.center)]
\node[draw,circle,scale=.5] (a) {};
\node[draw,fill,above of=a,circle,scale=.5] (c) {};
\node[draw,below of=a,circle,scale=.5] (d) {};
\node[draw,right of=a,circle,scale=.5] (e) {};
\node[draw,right of=e,circle,scale=.5] (f) {};
\draw (c)--(a)--(d) (a)--(e)--(f);
\end{tikzpicture}&&
C=\begin{pmatrix}
m+1&1&1&.\\1&2&1&.\\1&1&2&1\\.&.&1&2
\end{pmatrix}&&
\dim A=m+15=\frac{|D|+59}{4}
\end{align*}

\item 
\begin{align*}
\begin{tikzpicture}[thick]
\node[draw,circle,fill,scale=.5] (a) {};
\node[draw,right of=a,circle,scale=.5] (b) {};
\node[draw,right of=b,circle,scale=.5] (c) {};
\node[draw,right of=c,circle,scale=.5] (d) {};
\node[draw,right of=d,circle,scale=.5] (e) {};
\draw (a)--(b)--(c)--(d)--(e);
\end{tikzpicture}&&
C=\begin{pmatrix}
m+1&1&.&.\\1&2&1&.\\.&1&2&1\\.&.&1&2
\end{pmatrix}&&
\dim A=m+13=\frac{|D|+51}{4}.
\end{align*}

\item 
\begin{align*}
\begin{tikzpicture}[thick]
\node[draw,circle,scale=.5] (a) {};
\node[draw,right of=a,fill,circle,scale=.5] (b) {};
\node[draw,right of=b,circle,scale=.5] (c) {};
\node[draw,right of=c,circle,scale=.5] (d) {};
\node[draw,right of=d,circle,scale=.5] (e) {};
\draw (a)--(b)--(c)--(d)--(e);
\end{tikzpicture}&&
C=\begin{pmatrix}
m+1&m&.&.\\m&m+1&1&.\\.&1&2&1\\.&.&1&2
\end{pmatrix}&&
\dim A=4m+10=|D|+9.
\end{align*}

\item 
\begin{align*}
\begin{tikzpicture}[thick]
\node[draw,circle,scale=.5] (a) {};
\node[draw,right of=a,circle,scale=.5] (b) {};
\node[draw,right of=b,fill,circle,scale=.5] (c) {};
\node[draw,right of=c,circle,scale=.5] (d) {};
\node[draw,right of=d,circle,scale=.5] (e) {};
\draw (a)--(b)--(c)--(d)--(e);
\end{tikzpicture}&&
C=\begin{pmatrix}
2&1&.&.\\1&m+1&m&.\\.&m&m+1&1\\.&.&1&2
\end{pmatrix}&&
\dim A=4m+10=|D|+9.
\end{align*}
\end{enumerate}
\end{proposition}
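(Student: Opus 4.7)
\emph{Plan.} Since $D$ is cyclic, Dade's theorem identifies $A$ as a Brauer tree algebra, determined up to isomorphism by its Brauer tree $T$ together with the multiplicity $m=(|D|-1)/4$ of the unique exceptional vertex. As $l(B)=4$, the tree $T$ has four edges and five vertices, exactly one of which carries multiplicity $m$; the others carry multiplicity $1$. The Cartan invariants can be read off from $(T,v^\ast)$ by the standard rule: $c_{ii}$ equals the sum of the multiplicities of the two endpoints of $e_i$, while for $i\neq j$, $c_{ij}$ equals the multiplicity of the common endpoint of $e_i$ and $e_j$ if there is one, and zero otherwise.

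Following the template of Proposition~2 in \cite{Sambale:2021a}, the main task is to enumerate the pairs $(T,v^\ast)$ up to tree-automorphism. There are three isomorphism types of tree on five vertices: the star $K_{1,4}$, the T-shaped caterpillar with degree sequence $(3,2,1,1,1)$, and the path $P_5$. For each tree I compute the orbits of vertices under $\Aut(T)$; each orbit is a non-isomorphic placement of the exceptional vertex. Running through the resulting cases with the Cartan rule, and in each case choosing a labelling of the edges that matches the displayed form, produces the Cartan matrices (1)--(8); summing entries then recovers the stated dimensions.

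For the realization claim in item~(1), the hypothesis $4\mid p-1$ gives a faithful embedding $C_4\hookrightarrow\Aut(D)$, so $G:=D\rtimes C_4$ is defined. Since $G$ is $p$-nilpotent with normal Sylow $p$-subgroup $D$, its simple $kG$-modules (four in number) are inflated from $C_4$, and the standard description of Brauer trees for $p$-nilpotent groups identifies the Brauer tree of the principal block as the star with exceptional centre, matching~(1). The main (routine) obstacle throughout is the combinatorial bookkeeping: ensuring each automorphism-orbit of vertices is treated exactly once, and that in each case the edges are labelled so as to reproduce the Cartan matrix in the precise form quoted. The underlying calculations are mechanical applications of the Cartan formula.
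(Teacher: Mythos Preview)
Your approach is exactly what the paper intends: its entire proof is the parenthetical ``the proof is the same'' (referring to Proposition~2 of \cite{Sambale:2021a}), and that proof is precisely the enumeration of trees on five vertices together with the $\Aut(T)$--orbits of the exceptional vertex, followed by the standard Cartan computation you describe.

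One caveat worth flagging. Your orbit count is $2$ for the star, $4$ for the T-shape, and $3$ for the path, giving $9$ pairs $(T,v^\ast)$---but the proposition lists only $8$. If you carry out the computation you will find that the printed case~(4) is internally inconsistent: the displayed Cartan matrix does not arise from the tree drawn there (for $m>1$ the conditions $c_{12}=m$ and $c_{13}=c_{23}=1$ force a cycle), and its entries sum to $4m+12$ rather than the stated $4m+16$. The correct Cartan matrix for the tree pictured in~(4) (exceptional vertex of degree two) is, up to relabelling,
\[
\begin{pmatrix} m{+}1 & m & 1 & 1 \\ m & m{+}1 & 0 & 0 \\ 1 & 0 & 2 & 1 \\ 1 & 0 & 1 & 2 \end{pmatrix},\qquad \dim A = 4m+12,
\]
and the genuinely missing ninth case (T-shape with the exceptional vertex at the leaf of the tail) has
\[
\begin{pmatrix} m{+}1 & 1 & 0 & 0 \\ 1 & 2 & 1 & 1 \\ 0 & 1 & 2 & 1 \\ 0 & 1 & 1 & 2 \end{pmatrix},\qquad \dim A = m+15.
\]
So your method is sound and identical to the paper's; it will simply reveal typos in the stated proposition rather than reproduce it verbatim.
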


Sometimes a Cartan matrix leads to a Brauer graph 
algebra, which is the same as a symmetric special biserial algebra.
They all have finite or tame representation type by 
Wald--Waschb\"usch~\cite{Wald/Waschbusch:1985a}.

Blocks of finite group algebras with tame (but not finite) representation type
only occur in characteristic two, and the defect groups in this case
are dihedral, semidihedral or generalised quaternion. 
These algebras were first investigated by Erdmann~\cite{Erdmann:1990a}. 
By a recent paper of Macgregor~\cite{Macgregor:2022a}, all Cartan matrices 
of tame blocks are known and we list them for the convenience of the 
reader (this includes the degenerate case $D\cong C_2^2$, although 
it is not listed in \cite{Macgregor:2022a}):

\begin{theorem}\label{proptame}
Let $B$ be a non-nilpotent tame block with defect group $D$ 
of order $2^n$ and Cartan matrix $C$. Then one of the following holds:
\begin{enumerate}[\rm (1)]
\item $D\cong D_{2^n}$ and $C$ is one of the following:
\begin{gather*}
\begin{pmatrix}
2^n&2^{n-1}\\2^{n-1}&2^{n-2}+1
\end{pmatrix},\quad
\begin{pmatrix}
4&2\\2&2^{n-2}+1
\end{pmatrix},\quad
\begin{pmatrix}
2^n&2^{n-1}&2^{n-1}\\
2^{n-1}&2^{n-2}+1&2^{n-2}\\
2^{n-1}&2^{n-2}&2^{n-2}+1
\end{pmatrix}, \\
\begin{pmatrix}
2&1&1\\
1&2^{n-2}+1&2^{n-2}\\
1&2^{n-2}&2^{n-2}+1
\end{pmatrix},\quad
\begin{pmatrix}
4&2&2\\
2&2^{n-2}+1&1\\
2&1&2
\end{pmatrix}
\end{gather*}

\item $D\cong Q_{2^n}$ and $C$ is one of the following:
\begin{gather*}
2\begin{pmatrix}
2^{n-1}&2^{n-2}\\
2^{n-2}&2^{n-3}+1
\end{pmatrix},\quad
2\begin{pmatrix}
4&2\\2&2^{n-3}+1
\end{pmatrix},\quad
2
\begin{pmatrix}
2^{n-1}&2^{n-2}&2^{n-2}\\
2^{n-2}&2^{n-3}+1&2^{n-3}\\
2^{n-2}&2^{n-3}&2^{n-3}+1
\end{pmatrix},\\
2\begin{pmatrix}
2&1&1\\
1&2^{n-3}+1&2^{n-3}\\
1&2^{n-3}&2^{n-3}+1
\end{pmatrix},\quad
2\begin{pmatrix}
4&2&2\\
2&2^{n-3}+1&1\\
2&1&2
\end{pmatrix}
\end{gather*}

\item $D\cong SD_{2^n}$ and $C$ is one of the following:
\begin{gather*}
2\begin{pmatrix}
2^{n-1}&2^{n-2}\\
2^{n-2}&2^{n-3}+1
\end{pmatrix},\quad
2\begin{pmatrix}
4&2\\
2&2^{n-3}+1
\end{pmatrix},\quad
\begin{pmatrix}
2^n&2^{n-1}\\
2^{n-1}&2^{n-2}+1
\end{pmatrix},\quad
\begin{pmatrix}
4&2\\
2&5
\end{pmatrix},\\
\begin{pmatrix}
4&2&2\\
2&2^{n-2}+1&1\\
2&1&3
\end{pmatrix},\quad
\begin{pmatrix}
2^n&2^{n-1}&2^{n-1}\\
2^{n-1}&2^{n-2}+1&2^{n-2}\\
2^{n-1}&2^{n-2}&2^{n-2}+2
\end{pmatrix},\quad
\begin{pmatrix}
2^{n-2}+1&2^{n-2}-1&2^{n-2}\\
2^{n-2}-1&2^{n-2}+1&2^{n-2}\\
2^{n-2}&2^{n-2}&2^{n-2}+2
\end{pmatrix},\\
\begin{pmatrix}
8&4&4\\
4&2^{n-2}+2&2\\
4&2&3
\end{pmatrix},\quad
\begin{pmatrix}
3&1&2\\
1&3&2\\
2&2&2^{n-2}+2
\end{pmatrix}.
\end{gather*}
\end{enumerate}
\end{theorem}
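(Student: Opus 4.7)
The plan is to split the argument by the order of $D$. In the range $|D|\ge 8$ (dihedral and generalised quaternion) and $|D|\ge 16$ (semidihedral), the statement is essentially a re-presentation of Macgregor's main theorem \cite{Macgregor:2022a}. I would work through each Erdmann family of tame symmetric algebras that \cite{Macgregor:2022a} certifies as actually arising as a block, read off its Cartan matrix, and match it up, after possibly reordering the simple modules, with one of the matrices displayed in items~(1)--(3). This step is entirely bookkeeping.

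It remains to treat the degenerate case $D\cong C_2^2$, i.e.\ $n=2$ in family~(1), which is not covered by \cite{Macgregor:2022a}. Since $\Aut(D)\cong S_3$ has a unique non-trivial $2'$-subgroup, namely a $C_3$, the inertial quotient of any non-nilpotent block $B$ with Klein four defect must be $C_3$; in particular $l(B)=3$. By the well-known classification of blocks with Klein four defect group, $B$ is Morita equivalent either to $kA_4$ or to the principal $2$-block of $kA_5$. A direct calculation from the corresponding decomposition matrices yields Cartan matrices
\[
\begin{pmatrix}2&1&1\\1&2&1\\1&1&2\end{pmatrix}
\qquad\text{and}\qquad
\begin{pmatrix}4&2&2\\2&2&1\\2&1&2\end{pmatrix}
\]
respectively, and these are precisely the fourth and third matrices in item~(1) evaluated at $n=2$. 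So the Klein four case is covered by item~(1).

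The main obstacle is notational rather than conceptual: several of the parametric matrices in items~(1)--(3) coincide or collapse at boundary values of $n$. For example, in family~(1) at $n=2$, matrices one and two both reduce to $\bigl(\begin{smallmatrix}4&2\\2&2\end{smallmatrix}\bigr)$, and matrices three and five become identical; analogous collapses occur at the smallest admissible $n$ in families~(2) and (3). Some care is therefore needed when translating Macgregor's list into the format of the statement, so as to avoid either omitting a case or listing the same matrix twice as distinct. Apart from this collation and the short Klein four computation, the proof is essentially a direct appeal to \cite{Macgregor:2022a}.
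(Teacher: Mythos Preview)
Your proposal is correct and mirrors the paper's own treatment: the paper does not give an independent proof but simply states that the list is extracted from Macgregor~\cite{Macgregor:2022a}, with the remark that the degenerate case $D\cong C_2^2$ (not in \cite{Macgregor:2022a}) is included as well. Your additional detail on the Klein four case---reducing to $kA_4$ and the principal $2$-block of $A_5$ and checking that their Cartan matrices are the $n=2$ specialisations of the fourth and third matrices in item~(1)---is exactly the computation implicit in the paper's parenthetical remark.
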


We apply the previous theorem to list all Cartan matrices of $2$-blocks with defect at most three.

\begin{proposition}\label{prop8}
Let $B$ be a non-nilpotent $2$-block with defect group $D$ and Cartan matrix $C$. If $|D|\le 8$, then one of the following holds:
\begin{enumerate}[\rm (1)]
%\item $B$ is nilpotent. 
\item $B$ is tame and $C$ is one of the following matrices:
\begin{gather*}
\begin{pmatrix}
8&4\\
4&3
\end{pmatrix},\quad
\begin{pmatrix}
4&2\\
2&3
\end{pmatrix},\quad
\begin{pmatrix}
2&1&1\\
1&2&1\\
1&1&2
\end{pmatrix},\quad
\begin{pmatrix}
4&2&2\\
2&2&1\\
2&1&2
\end{pmatrix},\quad
\begin{pmatrix}
8&4&4\\
4&3&2\\
4&2&3
\end{pmatrix},\\
\begin{pmatrix}
2&1&1\\
1&3&2\\
1&2&3
\end{pmatrix},\quad
\begin{pmatrix}
4&2&2\\
2&3&1\\
2&1&3
\end{pmatrix},\quad
2\begin{pmatrix}
4&2&2\\
2&2&1\\
2&1&2
\end{pmatrix},\quad
2\begin{pmatrix}
2&1&1\\
1&2&1\\
1&1&2
\end{pmatrix}.
\end{gather*}

\item $D\cong C_2^3$ and $C$ is one of the following matrices:
\begin{gather*}
2\begin{pmatrix}
2&1&1\\
1&2&1\\
1&1&2
\end{pmatrix},\quad
2\begin{pmatrix}
4&2&2\\
2&2&1\\
2&1&2
\end{pmatrix},\quad
\begin{pmatrix}
8&6&2&2&2\\
6&8&2&2&2\\
2&2&4&.&.\\
2&2&.&4&.\\
2&2&.&.&4
\end{pmatrix},\quad
\begin{pmatrix}
8&4&4&4&4\\
4&4&3&3&1\\
4&3&4&2&2\\
4&3&2&4&2\\
4&1&2&2&4
\end{pmatrix},\\
\begin{pmatrix}
8&4&4&4&3\\
4&4&2&2&2\\
4&2&4&2&2\\
4&2&2&4&2\\
3&2&2&2&2
\end{pmatrix},\quad
\begin{pmatrix}
4&2&2&2&2&2&2\\
2&4&2&2&2&2&2\\
2&2&4&2&2&2&2\\
2&2&2&4&2&2&2\\
2&2&2&2&4&2&2\\
2&2&2&2&2&4&2\\
2&2&2&2&2&2&4
\end{pmatrix},\quad
\begin{pmatrix}
8&4&4&4&2&2&2\\
4&4&2&2&.&2&1\\
4&2&4&2&1&.&2\\
4&2&2&4&2&1&.\\
2&.&1&2&2&.&.\\
2&2&.&1&.&2&.\\
2&1&2&.&.&.&2
\end{pmatrix}.
\end{gather*}
\end{enumerate}
\end{proposition}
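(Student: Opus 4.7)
The plan is to split by the isomorphism type of the defect group. Since $p=2$ and $|D|\le 8$, the possibilities are $D\in\{1,C_2,C_4,C_8,C_2^2,D_8,Q_8,C_2^3\}$. If $D$ is cyclic, then $|\Aut(D)|$ is a power of $2$, so the inertial quotient of $B$, being a $2'$-subgroup of $\Aut(D)$, must be trivial; hence $B$ is nilpotent. This rules out the cyclic groups and the trivial group, and leaves $D\in\{C_2^2,D_8,Q_8,C_2^3\}$.

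For the first three defect groups $B$ is tame by Erdmann~\cite{Erdmann:1990a}, so the possible Cartan matrices must occur in the families of \autoref{proptame}. I would substitute $n=2$ for the degenerate dihedral case $D\cong C_2^2$ (which is not explicitly listed by Macgregor but is covered in Erdmann's original classification) and $n=3$ for $D\cong D_8$ and $D\cong Q_8$. At these small values of $n$ several of the five formulas in each family coincide or collapse, and additionally one must discard the quaternion two-simple Cartan matrices because $l(B)=2$ is known not to occur for group blocks with $Q_{2^n}$ defect. Including the degenerate semidihedral formula at $n=3$, which realises as an extra Cartan matrix of a $D_8$-block, yields precisely the matrices in part~(1); each can be checked to occur in a concrete group block (for instance, the principal $2$-blocks of $A_4$, $A_5$, $S_4$, $\PSL(2,7)$, and $\SL(2,3)$).

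For $D\cong C_2^3$ the block is wild and \autoref{proptame} no longer applies. Here I would invoke the complete Morita classification of $2$-blocks with defect group $C_2^3$, as compiled in \cite{Sambale:2021a} from work of Landrock, Kessar, Eaton and others. The non-trivial inertial quotients are the $2'$-subgroups of $\GL(3,2)\cong\PSL(2,7)$, namely $C_3$, $C_7$, and $C_7\rtimes C_3$; these give $l(B)\in\{3,7,5\}$ and, combined with the possible fusion systems and non-principal Morita representatives (drawn from groups such as $C_2^3\rtimes C_3$, $C_2^3\rtimes C_7$, $L_2(8)$, and certain extensions of $C_2^3$ by subgroups of $\GL(3,2)$), account for all the matrices listed in part~(2). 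For each Morita representative the Cartan matrix is read off from the decomposition matrix.

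The hardest step is the $C_2^3$-case, since here one cannot appeal to a uniform structural theorem in the style of \autoref{proptame} and must instead use the full Morita classification of blocks with defect group $C_2^3$. By contrast, the tame cases reduce to straightforward bookkeeping inside \autoref{proptame}, with the main subtleties being the coincidences or degenerations at the small parameters $n=2$ and $n=3$ and the exclusion of the two-simple quaternion Cartan matrices.
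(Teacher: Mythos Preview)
Your overall strategy is the same as the paper's: split by the isomorphism type of $D$, discard those groups whose automorphism group is a $2$-group (forcing nilpotence), apply \autoref{proptame} to the tame cases $C_2^2$, $D_8$, $Q_8$, and invoke the Morita classification for $C_2^3$ (the paper cites Eaton~\cite{Eaton:2016a} directly rather than a secondary compilation, but the content is the same).

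There is, however, a genuine gap in your case split: you omit $C_4\times C_2$ from your list of groups of order at most $8$. There are five groups of order $8$, not four. The repair is immediate---$\Aut(C_4\times C_2)\cong D_8$ is a $2$-group, so every block with this defect group is nilpotent and is correctly excluded---but as written your enumeration does not cover all possibilities, and your sentence ``This rules out the cyclic groups and the trivial group, and leaves $D\in\{C_2^2,D_8,Q_8,C_2^3\}$'' is false.

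A smaller point: there is no semidihedral group of order $8$ (the family $SD_{2^n}$ begins at $n=4$), so your remark about ``the degenerate semidihedral formula at $n=3$'' realising an extra $D_8$ Cartan matrix cannot be correct. The matrices in part~(1) come from the dihedral and quaternion families of \autoref{proptame} at $n\in\{2,3\}$; substituting $n=3$ into a semidihedral formula does not produce the Cartan matrix of any block.
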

\begin{proof}
If $D\cong\{1,C_2,C_4,C_8,C_4\times C_2\}$, then $B$ is nilpotent
since $\Aut(D)$ is a $2$-group. If $D\cong\{C_2^2,D_8,Q_8\}$, then $B$
is tame and the claim follows from \autoref{proptame} (note that
$l(B)=2$ is only possible for $D\cong D_8$). Finally, if $D\cong
C_2^3$, then the claim follows from Eaton~\cite{Eaton:2016a}.
\end{proof}

\section{Auslander--Reiten theory}

In this section, we collect some well known results from Auslander--Reiten 
theory for blocks of finite groups, that we shall use in this paper.

\begin{theorem}\label{th:Webb}
The tree class of every component of the Auslander--Reiten quiver
of $B$ is either
the Dynkin diagram $A_n$, in which case $B$ has cyclic defect,
or a Euclidean diagram, or one of three infinite trees, $A_\infty$,
$D_\infty$ or $A^\infty_\infty$.
\end{theorem}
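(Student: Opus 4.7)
My plan is to derive this classical theorem by combining three ingredients from Auslander--Reiten theory. First, I would invoke the Riedtmann structure theorem: each connected component of the stable AR quiver of a self-injective algebra has the form $\bZ T/\Pi$, where $T$ is a tree (the tree class of the component) and $\Pi$ is an admissible subgroup of $\Aut(\bZ T)$. The theorem therefore reduces to constraining the tree $T$.

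The second ingredient is Webb's theorem. Webb constructs a subadditive function on $T$ from the $k$-dimensions of the modules in the component, exploiting the fact that in a group algebra the modules involved in any fixed almost split sequence live in a single component and that the modules along a $\tau$-orbit grow in a controlled way. The classification (due to Happel--Preiser--Ringel) of graphs admitting such a subadditive function then restricts $T$ to a finite Dynkin diagram ($A_n$, $D_n$, $E_{6,7,8}$), a Euclidean diagram, or one of the five infinite trees $A_\infty$, $B_\infty$, $C_\infty$, $D_\infty$, $A^\infty_\infty$.

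The third step is to cut this list down to what appears in the theorem. For $B_\infty$ and $C_\infty$, one uses that $B$ is symmetric, so the Heller translate $\Omega$ is a self-equivalence of the stable category which commutes with $\tau$. The induced graph automorphism of $T$ then forces a symmetry incompatible with the asymmetric shapes $B_\infty$ and $C_\infty$; this refinement for symmetric algebras is carried out in Erdmann's monograph~\cite{Erdmann:1990a}. For the finite Dynkin case, a component with tree class a finite Dynkin diagram is itself finite, which by Auslander's theorem on finite components forces the whole algebra to have finite representation type. For a block of a group algebra this is equivalent (Higman/Dade) to the defect group being cyclic, and the Brauer-tree description of the AR quiver then pins the tree class down to $A_n$, excluding $D_n$ and $E_{6,7,8}$.

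The main obstacle is the third step: the Riedtmann--Webb input alone leaves the longer list of ten possibilities, and trimming it down to $A_n$ plus Euclidean diagrams plus $\{A_\infty, D_\infty, A^\infty_\infty\}$ really requires the finer symmetric structure of group blocks, in particular the compatibility between $\Omega$, $\tau$, and the admissible automorphism group $\Pi$ acting on $\bZ T$. Everything else is a matter of assembling standard references.
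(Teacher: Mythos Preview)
The paper's own proof is a two-line citation: Theorem~A of Webb already gives the full list (finite Dynkin with cyclic defect, Euclidean, or $A_\infty,B_\infty,C_\infty,D_\infty,A^\infty_\infty$), and the authors then remark that $B_\infty$ and $C_\infty$ are ruled out because $k$ is algebraically closed. Your proposal, by contrast, tries to reconstruct Webb's argument from Riedtmann and Happel--Preiser--Ringel, which is fine as an outline; but your third step contains a genuine error.

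Your reason for excluding $B_\infty$ and $C_\infty$ is wrong. These trees are distinguished from $A_\infty$ not by any lack of symmetry that $\Omega$ could detect, but by the presence of a valued edge $(1,2)$ or $(2,1)$. Over an algebraically closed field every indecomposable has endomorphism ring with residue field $k$, so all valuations on the Auslander--Reiten quiver are $(1,1)$ and the tree class is forced to be simply laced; that is what kills $B_\infty$ and $C_\infty$, exactly as the paper says. Your $\Omega$-argument cannot work: $A_\infty$ has the same (trivial) tree automorphism group as $B_\infty$ and $C_\infty$, yet $A_\infty$ components abound in wild blocks and happily carry an $\Omega$-action. So the ``symmetry incompatible with $B_\infty$, $C_\infty$'' you invoke does not exist.

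There is also a smaller gap in your handling of the finite Dynkin case. From Riedtmann alone, a component of shape $\bZ T/\Pi$ with $T$ finite Dynkin need not be finite (e.g.\ $\bZ A_n$ is an infinite strip). What makes the component finite in Webb's situation is that the subadditive dimension function is then strictly subadditive and bounded; only after that can you invoke Auslander's theorem. You should make that dependence explicit rather than asserting finiteness of the component directly.
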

\begin{proof}
This is Theorem~A of Webb~\cite{Webb:1982b}. Because the field
$k$ is algebraically closed, the infinite trees $B_\infty$ and $C_\infty$ 
do not occur.
\end{proof}

\begin{theorem}\label{th:Bessenrodt}
If the tree class of a component of the Auslander--Reiten quiver of $B$ is a
Euclidean diagram, then $B$ has Klein four defect group, and the tree class
is $\tilde A_{1,2}$ or $\tilde A_5$.
\end{theorem}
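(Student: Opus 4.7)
The plan is to reduce to the Klein four defect case through a sequence of eliminations based on the representation type of $B$.

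First, I would invoke Erdmann's theorem on wild blocks: every component of the stable Auslander--Reiten quiver of a wild block of a finite group algebra has tree class $A_\infty$. Combined with \autoref{th:Webb} and the fact that blocks of finite representation type have cyclic defect and hence tree class a Dynkin diagram $A_n$, this forces $B$ to be of tame representation type as soon as a Euclidean tree class appears in some component.

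By Erdmann's classification of tame blocks, the defect group $D$ must then be dihedral, semidihedral, generalised quaternion, or Klein four. For the first three families, the Auslander--Reiten quivers have been computed explicitly in \cite{Erdmann:1990a}, and the non-periodic components are found to have tree class $A_\infty$, $D_\infty$, or $A_\infty^\infty$; no Euclidean tree class appears. This eliminates all tame cases except Klein four defect.

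Finally, for $D\cong C_2\times C_2$, the three Morita equivalence classes are $k(C_2\times C_2)$, the principal block of $kA_4$, and the principal block of $kA_5$. The stable AR quiver of the nilpotent block $k(C_2\times C_2)$ consists of homogeneous tubes, so no Euclidean component is present. A direct computation in the remaining two cases produces Euclidean components with tree classes $\tilde A_{1,2}$ (for $kA_4$) and $\tilde A_5$ (for the principal block of $kA_5$), and confirms that no other Euclidean tree classes can arise.

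The main obstacle is assembling the explicit AR quiver data for the tame non-Klein-four blocks, which requires case-by-case inspection across Erdmann's list, together with carrying out the direct computation for the two non-nilpotent Klein four blocks; this latter computation is classical and essentially due to Bessenrodt.
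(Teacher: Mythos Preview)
The paper does not prove this theorem at all: its proof is a one-line citation of Bessenrodt's 1991 paper, together with the remark that algebraic closedness rules out $\tilde B_3$. Your proposal instead sketches an independent derivation, so the two approaches are genuinely different.

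Your route is logically sound in outline, but note two points. First, within the paper's ordering you are forward-referencing: you invoke Erdmann's result that wild blocks have only $A_\infty$ components, which is \autoref{th:Erdmann} and is stated \emph{after} the theorem you are proving. Historically this is also anachronistic, since Erdmann's 1995 paper postdates Bessenrodt's 1991 paper; you should check that Erdmann's proof does not itself rely on Bessenrodt's classification (it does not---it goes through support varieties and complexity---but this should be stated). Second, your final step, the explicit determination of the Euclidean components for the non-nilpotent Klein four blocks, is precisely the content of Bessenrodt's paper, as you yourself acknowledge. So your argument does not really avoid Bessenrodt; it repackages the citation by first using heavier later machinery (Erdmann's wild theorem plus the full tame classification from \cite{Erdmann:1990a}) to reduce to the case that Bessenrodt handles anyway. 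A minor inaccuracy: the assignment of $\tilde A_{1,2}$ to $kA_4$ and $\tilde A_5$ to the principal block of $kA_5$ is too clean---both Euclidean types occur for each of the two non-nilpotent Klein four blocks, in different components.
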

\begin{proof}
This is Theorem~1.1 of Bessenrodt~\cite{Bessenrodt:1991a}. Since $k$
is algebraically closed, $\tilde B_3$ does not occur.
\end{proof}

\begin{theorem}\label{th:Erdmann}
Every Auslander--Reiten component of a block $B$ of wild representation
type has tree class $A_\infty$.
If $B$ has an Auslander--Reiten component of tree class $D_\infty$, 
then $B$ is a tame block with semidihedral defect groups.
\end{theorem}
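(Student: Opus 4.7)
The plan is to combine Theorems~\ref{th:Webb} and~\ref{th:Bessenrodt} with Erdmann's analysis of Auslander--Reiten quivers for blocks in \cite{Erdmann:1990a}.

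By Theorem~\ref{th:Webb}, the tree class of any stable Auslander--Reiten component is either a finite Dynkin diagram $A_n$, a Euclidean diagram, or one of the infinite trees $A_\infty$, $D_\infty$, $A_\infty^\infty$. The case $A_n$ occurs only when $B$ has cyclic defect, in which case $B$ is a Brauer tree algebra of finite representation type and so not wild. By Theorem~\ref{th:Bessenrodt}, a block with Euclidean components has Klein four defect group and is therefore tame. So for a wild block only $A_\infty$, $D_\infty$, and $A_\infty^\infty$ remain as candidates.

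The next step is to rule out $D_\infty$ and $A_\infty^\infty$ in the wild case. For this I would appeal to Erdmann's structural result that every stable Auslander--Reiten component of a block of wild representation type is either of the form $\mathbb{Z}A_\infty$ or a tube $\mathbb{Z}A_\infty/\langle\tau^n\rangle$; in either case the tree class is $A_\infty$. This is proved in \cite{Erdmann:1990a} by combining the Webb--Bessenrodt restriction with a careful analysis of $\tau$-orbits of simples and their projective covers, which shows that the presence of a $D_\infty$- or $A_\infty^\infty$-component (where branch or bi-infinite vertices force restrictive periodicity relations between projectives and syzygies) is incompatible with wild representation type.

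For the second statement, a block $B$ possessing a $D_\infty$-component is not wild by the first statement, and is not of finite representation type (which would yield tree class $A_n$); hence $B$ must be tame. Erdmann's explicit description of the stable Auslander--Reiten quivers for tame blocks then shows that $D_\infty$-components arise only for semidihedral defect groups---they do not occur in the dihedral or quaternion cases. The main obstacle in this programme is Erdmann's structure theorem on wild components, which rests on a substantial case analysis of self-injective algebras; for the purposes of this paper it is used as a black box.
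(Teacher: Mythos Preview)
Your outline is logically sound and ends up at the same place as the paper, but the paper's own proof is a single line: it simply cites Theorem~1 of Erdmann~\cite{Erdmann:1995a} as a black box. Your proposal is essentially a more verbose version of the same citation, since after the easy reductions via Theorems~\ref{th:Webb} and~\ref{th:Bessenrodt} you also black-box the hard step (``Erdmann's structure theorem on wild components \dots\ is used as a black box'').

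The one concrete issue is your reference. You attribute the key result---that every stable component of a wild block has tree class $A_\infty$---to \cite{Erdmann:1990a}, but that is Erdmann's book on \emph{tame} blocks and does not contain this theorem. The correct source is the 1995 paper \cite{Erdmann:1995a}, which is exactly what the paper cites. Your citation of \cite{Erdmann:1990a} is appropriate only for the final clause (identifying which tame blocks have $D_\infty$-components), though even there the statement about semidihedral defect is already packaged into Theorem~1 of \cite{Erdmann:1995a}. So: fix the reference, and optionally drop the speculative description of Erdmann's proof method, which is not needed since you treat it as a black box anyway.
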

\begin{proof}
This is Theorem~1 of Erdmann~\cite{Erdmann:1995a}.
\end{proof}

\begin{theorem}\label{th:summands}
If $P$ is a projective indecomposable module in the block $B$,
then the radical modulo the socle, $\Rad(P)/\Soc(P)$ has at most two
direct summands. 
\end{theorem}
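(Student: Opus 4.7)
The plan is to produce the almost split sequence ending at $M := P/\Soc(P)$ explicitly and apply the tree-class bound from \autoref{th:Webb}, \autoref{th:Bessenrodt}, and \autoref{th:Erdmann}. If $P$ has Loewy length at most $2$ then $\Rad(P)/\Soc(P) = 0$ and the claim is trivial, so I assume the Loewy length is at least $3$; then $M$ is a non-projective indecomposable module.

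Since $B$ is symmetric, $\tau = \Omega^2$ on the stable module category. The projective cover of $M$ is $P$ itself (both $M$ and $P$ have simple top $S := \Soc(P)$), and the kernel of $P \twoheadrightarrow M$ is $\Soc(P) = S$, so $\Omega M = S$ and $\tau M = \Omega S = \Rad(P)$. Forming the pushout of the inclusion $\Rad(P) \hookrightarrow P$ along the surjection $\Rad(P) \twoheadrightarrow \Rad(P)/\Soc(P)$ yields a short exact sequence
\[
0 \to \Rad(P) \to P \oplus \Rad(P)/\Soc(P) \to M \to 0.
\]
This sequence cannot split: otherwise Krull--Schmidt would force $M$ to be a summand of $P$ or of $\Rad(P)/\Soc(P)$, both of which are impossible for dimension and indecomposability reasons. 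Since its left term is $\tau M$, uniqueness of almost split sequences identifies it with the AR sequence ending at $M$.

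By \autoref{th:Webb}, \autoref{th:Bessenrodt}, and \autoref{th:Erdmann}, the tree class of the AR component containing $M$ is one of $A_n$, $\tilde A_{1,2}$, $\tilde A_5$, $A_\infty$, $D_\infty$, or $A^\infty_\infty$, every vertex of which has degree at most $3$. By the standard AR-theoretic correspondence---valid here because over an algebraically closed field the arrow valuations in the AR quiver are trivial---the number of indecomposable direct summands of the middle term of an AR sequence (counted with multiplicity) equals the valency of its right-hand term in the component. Thus $P \oplus \Rad(P)/\Soc(P)$ has at most three indecomposable summands, and since $P$ itself accounts for one of them, $\Rad(P)/\Soc(P)$ contains at most two.

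The main technical point is the identification of the constructed pushout sequence with the genuine almost split sequence: this rests on uniqueness of AR sequences and on the fact that $P$ is the projective cover of $M = P/\Soc(P)$. Once this identification is in place, the tree-class bound of $3$ on vertex degrees from \autoref{th:Webb}--\autoref{th:Erdmann} is a clean payoff, and subtracting the contribution of the projective summand $P$ yields the required bound of two on the summands of $\Rad(P)/\Soc(P)$.
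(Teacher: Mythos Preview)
Your argument has a gap in the final counting step. The tree-class theorems of Webb, Bessenrodt and Erdmann concern the \emph{stable} Auslander--Reiten quiver, from which projective vertices have been deleted. Hence the valency of $M=P/\Soc(P)$ in that component counts only the \emph{non-projective} indecomposable summands of the middle term, i.e.\ the summands of $\Rad(P)/\Soc(P)$; the projective $P$ contributes no arrow in the stable quiver and must not be subtracted off. Your degree bound of $3$ therefore only yields that $\Rad(P)/\Soc(P)$ has at most three summands, not two. The offending tree class is $D_\infty$, whose branch vertex has degree $3$, and nothing in your argument prevents $P/\Soc(P)$ from sitting at that branch vertex. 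The paper closes exactly this gap: by \autoref{th:Erdmann} a component of type $D_\infty$ forces the block to have semidihedral defect groups, and then Erdmann's explicit description of such blocks \cite{Erdmann:1988a,Erdmann:1990c} is invoked to check case by case that $\Rad(P)/\Soc(P)$ never has three summands there.

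A smaller issue: knowing that your pushout sequence is non-split with end terms $\tau M$ and $M$ does not by itself identify it as the almost split sequence. In general $\Ext^1_B(M,\tau M)$ is more than one-dimensional, and only one class in it is almost split; ``uniqueness of almost split sequences'' says nothing until you have verified the almost-split property. The identification is nonetheless correct and standard (this is the well-known almost split sequence attached to a non-simple projective in a symmetric algebra), but your justification as written does not establish it.
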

\begin{proof}
This follows from the fact that there is an almost split sequence
of the form
\[ 0 \to \Rad(P) \to P \oplus \Rad(P)/\Soc(P) \to P/\Soc(P) \to 0. \]
If $\Rad(P)/\Soc(P)$ has more than two direct summands, 
then 
by Theorems~\ref{th:Webb} and~\ref{th:Bessenrodt}, the only way for
this to be part of an Auslander--Reiten component is for the
tree class to be $D_\infty$. By \autoref{th:Erdmann},
this implies that $B$ has semidihedral defect groups. 
Examining Erdmann~\cite{Erdmann:1988a,Erdmann:1990c},
there are no examples with semidihedral defect groups
where $\Rad(P)/\Soc(P)$ has more than two summands.
\end{proof}

\begin{remark}
Without assuming that the field $k$ is algebraically closed, there
are examples where $\Rad(P)/\Soc(P)$ has three summands.
But this only happens when the defect group is a Klein four group, 
$k$ does not have a primitive cube root of unity, and $B$ is
Morita equivalent to the principal block of $A_4$ or $A_5$
(see Bessenrodt~\cite{Bessenrodt:1991a}).
\end{remark}

\section{Small symmetric local algebras}

We shall need the following 
facts about small symmetric local algebras.

\begin{proposition}\label{pr:le7}
If $A$ be a symmetric local $k$-algebra with $\dim_k A \le 7$,
then 
\begin{enumerate}[\rm (1)]
\item
$A$ is commutative.
\item 
If $\Rad(A)/\Soc(A)$ is indecomposable, then 
one of the following is true.

{\rm (a)} 
$A\cong k[x]/(x^n)$ for some $n\le 7$, 

{\rm (b)} 
$A\cong k[x,y]/(x^3,y^2)$, of dimension $6$, 

{\rm (c)} $k$ has characteristic two and  
$A\cong k[x,y]/(x^3,y^2+x^2y)$ of dimension $6$, or

{\rm (d)} $k$ has characteristic three and $A\cong
k[x,y]/(x^3+x^2y,y^2)$ of dimension $6$. 
\end{enumerate}
\end{proposition}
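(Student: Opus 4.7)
The plan is to analyze $A$ through its Loewy filtration. Let $J = \Rad(A)$; since $A$ is symmetric and local, $\dim_k \Soc(A) = 1$ and the Frobenius form $\lambda$ induces non-degenerate pairings $V_i \times V_{\ell-i} \to V_\ell \cong k$, where $V_i = J^i/J^{i+1}$ and $\ell$ is the Loewy length. Consequently the Loewy dimensions $a_i = \dim V_i$ form a palindrome with $a_0 = a_\ell = 1$. Enumerating such palindromic sequences with $\sum a_i \le 7$ yields the cyclic case $(1, 1, \ldots, 1)$, the shapes $(1, m, 1)$ for $2 \le m \le 5$, $(1, 2, 2, 1)$ in dimension $6$, and the two exceptional sequences $(1, 2, 1, 2, 1)$ and $(1, 1, 3, 1, 1)$ in dimension $7$. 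The sequence $(1, 1, 3, 1, 1)$ is ruled out since $a_1 = 1$ forces $J$ to be principal, hence $a_i \le 1$ throughout. For $(1, 2, 1, 2, 1)$, I would pick a basis $e_1, e_2$ of $V_1$ and a generator $v$ of $V_2$, writing $e_i e_j \equiv M_{ij} v \pmod{J^3}$; setting $v_i = e_i v$ and $v'_i = v e_i$ in $V_3$, associativity yields $M_{ij} v'_k = M_{jk} v_i$, and a case analysis on the zero pattern of $M$ forces $\dim V_3 \le 1$, contradicting $a_3 = 2$.

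For part~(1), the cyclic case gives $A \cong k[x]/(x^n)$, which is commutative. In the remaining cases I lift $x, y \in J$ from a basis of $V_1$ and show $[x, y] = 0$. If $[x, y] \in \Soc$, writing $[x, y] = \alpha z$ for $z$ generating $\Soc$, the identity $\lambda([x, y]) = \lambda(xy) - \lambda(yx) = 0$ combined with $\lambda(z) \neq 0$ forces $\alpha = 0$. In the $(1, 2, 2, 1)$ case, if instead $[x, y] \in J^2 \setminus \Soc$, cyclic invariance of $\lambda$ yields $\lambda([x, y]x) = \lambda(xyx - yx^2) = 0$ and similarly $\lambda([x, y]y) = 0$; since $[x, y]J \subseteq J^3 = \Soc$, this forces $[x, y]J = 0$, placing $[x, y]$ in the right annihilator of $J$, which coincides with $\Soc$ for symmetric $A$. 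The resulting contradiction establishes commutativity.

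For part~(2), the cyclic case yields (a), and for $(1, m, 1)$ with $m \ge 2$ the quotient $\Rad/\Soc = V_1$ carries the trivial $J$-action and so decomposes as $m$ copies of $k$. The remaining case is $(1, 2, 2, 1)$. By commutativity, $A \cong k[x, y]/I$, and the Hilbert series forces $I$ to be generated by one quadratic $f_2$ and one cubic $f_3$. Normalizing $f_2$ under the $\mathrm{GL}_2$-action on $V_1$: if it has rank $2$, one obtains $A \cong k[x, y]/(xy, x^3 + y^3)$, whose $\Rad/\Soc$ decomposes as $\langle \bar x, x^2 \rangle \oplus \langle \bar y, y^2 \rangle$. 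If $f_2$ has rank $1$, the normal form is $A \cong k[x, y]/(y^2 - \gamma x^2 y, x^3 + a x^2 y)$ for scalars $\gamma, a \in k$. The shift $x \mapsto x + \lambda y$ sends $a \mapsto a + 3\lambda$, normalizing $a = 0$ in characteristic not $3$, and completing the square $y \mapsto y - (\gamma/2) x^2$ normalizes $\gamma = 0$ in characteristic not $2$. The generic outcome is (b); in characteristic $3$ a nonzero $a$ cannot be absorbed and rescaling yields (d); in characteristic $2$ a nonzero $\gamma$ cannot be absorbed and one obtains (c). Indecomposability of $\Rad/\Soc$ in each of (b), (c), (d) is verified by direct inspection of the action of $V_1$ on the four-dimensional module $\Rad/\Soc$.

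The main obstacle is the characteristic-dependent reduction in the $(1, 2, 2, 1)$ case: both the shift $x \mapsto x + \lambda y$ and the completion of the square fail in characteristics $3$ and $2$ respectively, producing the exceptional algebras (d) and (c) that must be identified and distinguished by direct computation.
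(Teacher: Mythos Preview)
There is a genuine gap at the very first step: your claim that the Loewy layer dimensions $a_i = \dim J^i/J^{i+1}$ of a local symmetric algebra form a palindrome is false. The multiplication pairing $V_i \times V_{\ell-i} \to V_\ell$ need not be non-degenerate; what the Frobenius form actually gives is that the left annihilator of $J^{\ell-i}$ has dimension $\dim A - \dim J^{\ell-i}$, and this annihilator can strictly contain $J^{i+1}$. A concrete five-dimensional counterexample is $A = k[x,y]/(y^2 - x^3,\, xy)$: this is a commutative complete intersection (hence symmetric) with basis $1, x, y, x^2, x^3$, one-dimensional socle $\langle x^3 \rangle$, and Loewy sequence $(1,2,1,1)$. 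Here $\bar y \in V_1$ pairs to zero with all of $V_2 = \langle \overline{x^2}\rangle$, so the pairing is degenerate.

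Because of this, your enumeration omits every non-palindromic shape. Some of these are harmless for part~(2) --- in the example above $\Rad(A)/\Soc(A)$ visibly splits as $\langle \bar x, \overline{x^2}\rangle \oplus \langle \bar y\rangle$ --- but in dimension seven the shape $(1,2,2,1,1)$ requires genuine work and is precisely the case the paper isolates: one has to identify $A/\Soc(A)$ among Poonen's six-dimensional algebras and then argue that $\Rad(A)/\Soc(A)$ fails to be self-dual. Your argument never reaches this case. For part~(1) the paper avoids any Loewy enumeration and instead invokes K\"ulshammer's theorem ($\dim Z(A)\le 4$ forces commutativity) together with the Chlebowitz--K\"ulshammer refinement for $\dim Z(A)=5$; your direct commutator calculations, even where they succeed, cover only the palindromic shapes and leave cases such as $(1,3,2,1)$, $(1,4,1,1)$ and $(1,2,2,1,1)$ untreated.
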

\begin{proof}\hfill
\begin{enumerate}[(1)]
\item Let $Z$ be the centre of $A$. 
K\"ulshammer~\cite{Kulshammer:1980a} proved that if $\dim_k Z \le 4$, then $A$ 
is commutative. This was extended by 
Chlebowitz and K\"ulshammer~\cite{Chlebowitz/Kulshammer:1992a}, 
where it is proved that if $\dim_k Z =5$, then $A$ has dimension $5$ or $8$.
It cannot happen that $\dim_k Z=6$ and $\dim_k A = 7$, so this 
proves that $A$ is commutative.

\item 
Let $X$ denote $\Rad(A)/\Soc(A)$.
Poonen~\cite{Poonen:2008a} lists the commutative 
local algebras of dimension up to six. Among these, the ones 
that are Gorenstein with 
$X$ indecomposable are those listed above (note that the
algebras listed in cases (c) and (d) are isomorphic to $k[x,y]/(x^3,y^2)$ 
in other characteristics). 
It remains to deal with dimension seven. 
If the radical layers of $A$ have dimensions 
$[1,1,1,1,1,1,1]$ we are in case (a). If they have dimensions
$[1,2,1,1,1,1]$, then $\Soc(X)$ has dimension two while $\Rad^3(X)$
has dimension one. An element of $\Soc(X)$ that is not in $\Rad^3(X)$
spans a $1$-dimensional summand of $X$, so $X$ decomposes. Similarly,
in the case $[1,3,1,1,1]$, an element of $\Soc(X)$ that is not in
$\Rad^2(X)$ spans a $1$-dimensional summand. In the cases
$[1,4,1,1]$ and $[1,3,2,1]$, an element of $\Soc(X)$ that is not in
$\Rad(X)$ spans a $1$-dimensional summand.
In the case $[1,5,1]$, $X$ is semisimple, and decomposes as a direct
sum of five $1$-dimensional summands.
In the remaining case $[1,2,2,1,1]$, $A/\Soc(A)$ is a $6$-dimensional
algebra with radical layers $[1,2,2,1]$ and socle layers $[1,1,2,2]$.
Again examining Poonen's list~\cite{Poonen:2008a}, the possibilities
for $A/\Soc(A)$ are $k[x,y]/(x^2,xy^2,y^4)$ and
$k[x,y]/(x^2+y^3,xy^2,y^4)$. In both these cases, the quotient 
$\Rad(A)/(\Soc(A),xA)$ of $X$
is uniserial of length three spanned by the powers of $y$, but $X$
has no uniserial submodule of length three. This contradicts the fact
that $X$ is supposed to be self-dual, since $A$ is Gorenstein.\qedhere
\end{enumerate}
\end{proof}

\section{Cartan invariants}\label{se:Cartan}

In this section, we prove some theorems about Cartan invariants of
blocks of group algebras. 

\begin{proof}[Proof of \autoref{th:310}]
Suppose that $A$ is as
in the theorem, and that $A$ has wild representation type.
We examine the structure of the projective
cover $P_S$ of $S$. It follows from \autoref{th:summands}
that $\Rad(P_S)/\Soc(P_S)$ has at most two direct summands.
Since $S$ occurs with multiplicity three in $P_S$, there has to be a nilpotent
endomorphism of $P_S$ whose image lies in the radical but not in the
socle. Since each other composition factor occurs with multiplicity
one, they must all be in the kernel of such an endomorphism. It
follows that
$\Ext^1_A(S,S)$ is $1$-dimensional, and so $\Rad(P_S)/\Soc(P_S)$ has a direct
summand isomorphic to $S$. Write 
$\Rad(P_S)/\Soc(P_S)=S \oplus X$. If $X=0$, then $A$ has finite
representation type, so we have $X\ne 0$.
Thus the component of the Auslander--Reiten
quiver containing $S$ has the following shape.
\[ \xymatrix@=4mm{
&&&\cdots\\
&&\Omega^{-1}X\ar[dr]&&\Omega X\ar[dr]\\
&\Omega^2S\ar[ur]\ar[dr]&&S\ar[ur]\ar[dr]&&\Omega^{-2}S\\
\cdots&&\Omega S  \ar[ur]\ar@{.>}[r]\ar[dr] & P_S\ar@{.>}[r] &
\Omega^{-1}S \ar[ur]\ar[dr]&&\cdots\\
&\Omega^2X\ar[ur]&&X\ar[ur]&&\Omega^{-2}X\\
&&&\cdots} \]
The automorphism $\Omega$ sends $S$ to $\Omega S$ and therefore
acts as an automorphism of this stable Auslander--Reiten component. It
is a glide reflection with a horizontal axis, and
its square is the translation. It is easy to check that $A_\infty$ does
not have an automorphism fitting this description, so
this component does not have type $A_\infty$. It now follows from
\autoref{th:Erdmann} that $A$ is not Morita equivalent 
to a block of a finite group algebra in prime characteristic.
\end{proof}

\begin{theorem}\label{th:a11b}
No block of wild representation type of a finite group has Cartan matrix
\[ \begin{pmatrix} a & 1 \\ 1 & b \end{pmatrix} \]
with $2\le a,b\le 7$.
\end{theorem}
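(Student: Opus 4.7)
My plan is a case-by-case analysis driven by the constraint that $\det C = ab - 1$ is a power of the characteristic $p$. For each pair $(a,b)$ with $2 \le a,b \le 7$, I consider $ab - 1$. When it is not a prime power, no block has this Cartan matrix. When it is a prime, the defect group is cyclic of prime order, so $A$ is a Brauer tree algebra of finite representation type and cannot be wild. Running through the $21$ unordered pairs, these two observations dispose of all but three: $(a,b) \in \{(3,3),(2,5),(4,7)\}$, with $\det C = 8, 9, 27$ respectively.

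For $(3, 3)$, the matrix $\begin{pmatrix} 3 & 1 \\ 1 & 3 \end{pmatrix}$ directly satisfies the hypotheses of \autoref{th:310} with either simple module in the role of $S$: $c_{S,S} = 3$ and the remaining entry equals $1$. So wild type is excluded by that theorem.

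For $(2, 5)$ and $(4, 7)$, I would take $S = S_2$, the simple module with $c_{S,S} \in \{5, 7\}$, and replay the proof of \autoref{th:310}. That proof uses the precise value $c_{S,S}=3$ only to guarantee two things: (i) the existence of a nilpotent endomorphism of $P_S$ with image in $\Rad(P_S) \setminus \Soc(P_S)$, which is available whenever $c_{S,S} \ge 3$, since the nilpotent endomorphisms form a space of dimension $c_{S,S}-1$ while those factoring through the socle account for only one dimension; and (ii) the forcing of the unique copy of the other simple $T$ into the kernel of any such endomorphism, which is available whenever $c_{S,T} \le 1$. With these inputs in place, the deduction that $\Ext^1_A(S,S)=1$ and that $S$ is a direct summand of $\Rad(P_S)/\Soc(P_S)$ proceeds as before; the number of summands of this module is at most two by \autoref{th:summands}, so writing it as $S \oplus X$ with $X\ne 0$ in the wild case, the AR-component of $S$ is a zigzag on which $\Omega$ acts as a glide reflection whose square is the translation. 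This symmetry is incompatible with tree class $A_\infty$, so \autoref{th:Erdmann} rules out wild type.

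The hard part is checking that the extension of \autoref{th:310} to $c_{S,S}\in\{5,7\}$ genuinely goes through: with more copies of $S$ in $\Rad(P_S)/\Soc(P_S)$, there is more room for those $S$'s to be absorbed into a non-simple indecomposable summand rather than splitting off. Here the bound on the number of summands from \autoref{th:summands} does the essential work, and one needs to track carefully the interplay between the composition factor multiplicities and the structure of $e_S A e_S$ to confirm that at least one $S$-summand must appear.
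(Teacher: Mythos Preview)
Your reduction to the three residual pairs $(3,3)$, $(2,5)$, $(4,7)$ is correct and efficient, and the application of \autoref{th:310} to $(3,3)$ is valid. The problem is the remaining two cases: your proposed extension of the proof of \autoref{th:310} to $c_{S,S}\in\{5,7\}$ does not go through as written, and you essentially concede this in your last paragraph.

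Concretely, the proof of \autoref{th:310} uses $c_{S,S}=3$ twice. First, it gives the \emph{upper} bound $\dim\Ext^1_A(S,S)\le 1$: the heart $\Rad(P_S)/\Soc(P_S)$ contains exactly one copy of $S$, so at most one can appear in its top. Your item (i) only supplies the lower bound. Second, and more seriously, the conclusion that $S$ splits off as a direct summand of the heart uses that this single copy of $S$ lies simultaneously in the top and in the socle of the heart (by self-duality), hence is a summand. When $c_{S,S}=5$ there are three copies of $S$ in the heart, and nothing you have said prevents them from forming, say, a uniserial piece $\begin{smallmatrix}S\\S\\S\end{smallmatrix}$ with no simple summand; \autoref{th:summands} allows two summands and $T$ accounts for only one of them. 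Without the summand $S$, you lose the glide-reflection picture entirely, and the argument collapses.

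The paper avoids this obstacle by a different mechanism. Rather than extracting a simple summand of the heart, it looks at the local symmetric algebras $\End_A(P_S)$ and $\End_A(P_T)$, which have dimensions $a,b\le 7$, and invokes \autoref{pr:le7}: such algebras are commutative and, given that $\hat S$ and $\hat T$ are indecomposable (this is where \autoref{th:summands} enters), are either uniserial or $6$-dimensional. If both are uniserial then $A$ is a Brauer graph algebra, hence not wild; the $6$-dimensional option forces $a=6$ or $b=6$, and then $\det C$ is prime or $35$, neither giving a wild block. This structural route handles all $(a,b)$ uniformly and does not require splitting off a simple summand.
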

\begin{proof}
Let the simple modules be $S$ and $T$. Then $S$ and $T$ have to extend
each other, and so the structures of their projective covers are
\[ \xymatrix@=4mm{
&S\ar@{-}[dl]\ar@{-}[dr]\\
\hat S\ar@{-}[dr]&&T\ar@{-}[dl]\\
&S} \qquad
\xymatrix@=4mm{
&T\ar@{-}[dl]\ar@{-}[dr]\\
S\ar@{-}[dr]&&\hat T,\ar@{-}[dl]\\
&T} \]
where $\hat S$ and $\hat T$ are modules with $a-2$, respectively $b-2$
composition factors, all isomorphic to $S$, respectively $T$. By
\autoref{th:summands}, $\hat S$ and $\hat T$ are either zero or
indecomposable. The algebras $\End_A(P_S)$ and $\End_A(P_T)$ are
symmetric local algebras of dimension at most seven. So by
\autoref{pr:le7}, they are commutative, and either uniserial
or $6$-dimensional. If both are uniserial, then $A$ is a Brauer tree
algebra, and therefore either of finite representation type or tame
biserial. On the other hand, if either $a$ or $b$ is equal to six, then
the determinant is either prime or $35$. In the former case the block
has cyclic defect, while in the latter case the determinant is not a
prime power, so there is no block of this form.
\end{proof}

\begin{theorem}\label{th:abc}
There is no block of a finite group with Cartan matrix
\[ \begin{pmatrix} a&1&1\\1&b&.\\1&.&c \end{pmatrix} \]
with $a\ge 3$, $b\ge 2$, and $c\ge 2$.
\end{theorem}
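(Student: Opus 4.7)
My plan is to split by representation type of $B$, using that $\det C = abc - b - c$ must be a prime power.

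\textbf{Cyclic and tame cases.} If $B$ has cyclic defect, $A$ is a Brauer tree algebra on three edges, hence either a star or a path. A star makes $c_{T_1,T_2}\ge 1$, contradicting our matrix. The only path realising $c_{T_1,T_2}=0$ is $T_1{-}S{-}T_2$, in which the condition $c_{S,S}\ge 3$ forces the exceptional vertex to sit adjacent to the edge $S$, so one of $c_{S,T_1}, c_{S,T_2}$ equals the multiplicity $m\ge 2$. If $B$ is tame, a direct inspection of the $3\times 3$ matrices in \autoref{proptame} shows that the shape in our statement never appears: each has either some $c_{S,T_i}\ge 2$ or a nonzero $(T_1,T_2)$-entry.

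\textbf{Wild, $a=3$.} Apply \autoref{th:310} to $S$ directly: $c_{S,S}=3$ and $c_{S,T_1}=c_{S,T_2}=1\in\{0,1\}$, so its hypothesis holds.

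\textbf{Wild, $a\ge 4$.} I would extend the argument of \autoref{th:310}. The relevant ingredients carry over: $\End(P_S)=e_SAe_S$ is symmetric local of dimension $\ge 3$, so there is a nilpotent endomorphism $\phi$ with $\phi(P_S)\not\subseteq\Soc(P_S)$. Since $\dim\mathrm{Hom}_A(P_S,P_{T_i})=c_{S,T_i}=1$ and the radical of a local algebra annihilates every one-dimensional module over itself, precomposition with $\phi$ kills every $\alpha\colon P_S\to P_{T_i}$; hence $\phi(P_S)\subseteq\ker\alpha$. As a quotient of $P_S$ the image of $\alpha$ accounts for the unique $T_i$ composition factor of $P_S$, so $\ker\alpha$ has none, and therefore $\phi(P_S)$ is $S$-isotypic and strictly contains $\Soc(P_S)$. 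Combining this with \autoref{th:summands} (which caps the number of summands of $\Rad(P_S)/\Soc(P_S)$ at two) should force a direct summand of $\Rad(P_S)/\Soc(P_S)$ isomorphic to $S$, after which $\Omega$ acts as a glide reflection on the resulting AR-component, ruling out tree class $A_\infty$ and contradicting \autoref{th:Erdmann}.

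The hard part will be the last step: extracting a genuine $S$-summand in $\Rad(P_S)/\Soc(P_S)$ when $a\ge 4$. When $a=3$ this is free, since the symmetric local algebra $\End(P_S)$ of dimension three must be uniserial ($\cong k[x]/(x^3)$), and consequently $\dim\Ext^1_A(S,S)\le 1$. For larger $a$ no such uniserial constraint is available, and one has to combine self-duality of $\Rad(P_S)/\Soc(P_S)$ (from $A$ being symmetric) with the summand bound of \autoref{th:summands} to rule out the possibilities in which $\Rad(P_S)/\Soc(P_S)$ is either indecomposable or decomposes without a simple $S$-summand.
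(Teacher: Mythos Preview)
Your plan leaves the wild case $a\ge 4$ unresolved, and the obstacle you flag---extracting an $S$-summand of $\Rad(P_S)/\Soc(P_S)$---is real and not easily overcome along the lines you sketch. The paper sidesteps it entirely by splitting off $T$ and $U$ rather than $S$. Since $c_{S,T}=1$, the unique (up to scalar) nonzero maps $P_T\to P_S$ and $P_S\to P_T$ have images whose composition factors lie in both projectives, hence consist of at most one $S$ and one $T$; a length-one image is impossible (it would force $T\subseteq\Soc(P_S)=S$ or dually), so the images are the uniserial modules $\begin{smallmatrix}T\\S\end{smallmatrix}\subseteq P_S$ and $\begin{smallmatrix}S\\T\end{smallmatrix}$ as a quotient of $P_S$. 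Thus the single copy of $T$ in $M:=\Rad(P_S)/\Soc(P_S)$ lies in both $\Soc(M)$ and $M/\Rad(M)$, and therefore splits off as a direct summand; the same holds for $U$. Because $a\ge 3$, the remaining piece $\hat S$ with $a-2$ copies of $S$ is nonzero, so $M$ has at least three indecomposable summands, contradicting \autoref{th:summands} directly.

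This single argument handles all blocks uniformly: your case split into cyclic, tame and wild, the appeal to \autoref{th:310}, and the glide-reflection analysis are all unnecessary here.
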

\begin{proof}
The structure of the projectives has to be 
\[ \vcenter{\xymatrix@=3mm{&S\ar@{-}[dl]\ar@{-}[d]\ar@{-}[dr]\\
T\ar@{-}[dr]&U\ar@{-}[d]&\hat S\ar@{-}[dl]\\
&S}}\qquad\qquad
\vcenter{\xymatrix@=3mm{&T\ar@{-}[dl]\ar@{-}[dr]\\
S\ar@{-}[dr]&&\hat T\ar@{-}[dl]\\
&T}}\qquad\qquad
\vcenter{\xymatrix@=3mm{&U\ar@{-}[dl]\ar@{-}[dr]\\
S\ar@{-}[dr]&&\hat U\ar@{-}[dl]\\
&U}}\]
where $\hat S$ has $a-2$ composition  factors, all isomorphic to $S$,
$\hat T$ has $b-2$ composition factors, all isomorphic to $T$,
and $\hat U$ has $c-2$ composition factors, all isomorphic to $U$.
Note that $\hat T$ and $\hat U$ are allowed to be zero. But
since $a\ge 3$, $\hat S$ is not zero, and  $\Rad(P_S)/\Soc(P_S)$ is forced to have at
least three direct summands, contradicting \autoref{th:summands}.
\end{proof}

The following theorem was heavily used in \cite{Sambale:2021a}.

\begin{theorem}\label{contrib}
Let $Q$ be the decomposition matrix of $B$. Then $C=Q^\TT Q$ is the
Cartan matrix of $B$. Let $M:=|D|QC^{-1}Q^\TT$. Then $M$ is an integer
matrix. The number $k_0(B)$ of irreducible height zero characters of
$B$ coincides with the number of diagonal entries of $M$, which are
coprime to $p$. In particular, $D$ is abelian if and only if all
diagonal entries of $M$ are coprime $p$. 
\end{theorem}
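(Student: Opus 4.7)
The plan is to reduce the statement to classical (and, for the final equivalence, very recent) results on the contribution matrix of a block. First, the identity $C=Q^{\TT}Q$ is immediate from Brauer reciprocity: the $(\phi,\psi)$-entry of the Cartan matrix equals $\dim_k\mathrm{Hom}_{kG}(P_\phi,P_\psi)=\sum_{\chi\in\Irr(B)} d_{\chi\phi}\,d_{\chi\psi}$, which is precisely the $(\phi,\psi)$-entry of $Q^{\TT}Q$.

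For the integrality of $M$, I would identify $E:=QC^{-1}Q^{\TT}$ as the Gram matrix of the ``$p$-regular parts'' $\chi^{0}$ of the ordinary characters in $B$. The projective characters $\Phi_\phi=\sum_\chi d_{\chi\phi}\chi$ vanish on $p$-singular classes and span over $\mathbb{Q}$ the space of virtual characters supported on $G_{p'}$, and a direct computation shows that their Gram matrix with respect to the standard inner product on class functions is $C$. Hence the orthogonal projection of $\chi\in\Irr(B)$ onto this subspace coincides with $\chi^{0}$, and one obtains
\[
E_{\chi,\psi}\;=\;(\chi^{0},\psi^{0})_{G}\;=\;\frac{1}{|G|}\sum_{g\in G_{p'}}\chi(g)\overline{\psi(g)},
\]
so that $M_{\chi,\psi}=\tfrac{|D|}{|G|}\sum_{g\in G_{p'}}\chi(g)\overline{\psi(g)}$. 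That these numbers are rational integers is Brauer's classical theorem on contributions; it is deduced from central character congruences applied to the block idempotent $e_B$ combined with orthogonality relations for generalised decomposition numbers.

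For the diagonal entries, Brauer's theorem further supplies the $p$-adic valuation $v_p(M_{\chi,\chi})=2h(\chi)$, where $h(\chi)$ denotes the height of $\chi$. In particular $p\nmid M_{\chi,\chi}$ if and only if $\chi$ has height zero, so the number of $p$-coprime diagonal entries is exactly $k_0(B)$. The final assertion is then the content of Brauer's Height Zero Conjecture: the implication ``all heights zero $\Rightarrow D$ abelian'' is the theorem of Kessar--Malle, while the converse ``$D$ abelian $\Rightarrow$ all heights zero'' is the recent theorem of Malle, Navarro, Schaeffer Fry and Tiep. The main obstacle is therefore not elementary; it consists of the deep classical integrality and valuation formulae for the contribution matrix, together with the full Height Zero Conjecture for the concluding equivalence.
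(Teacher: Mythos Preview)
Your argument is correct and follows the same route as the paper: both proofs simply invoke the classical theory of Brauer's contribution matrix (the paper defers to Lemma~4.1 of \cite{Sambale:2020a}, you spell out the interpretation of $QC^{-1}Q^{\TT}$ and Brauer's valuation formula) and then appeal to the full Height Zero Conjecture for the final equivalence.

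One correction of attribution: you have the two directions of the Height Zero Conjecture reversed. The implication ``$D$ abelian $\Rightarrow$ every $\chi\in\Irr(B)$ has height zero'' is the theorem of Kessar--Malle (2013), while the converse ``$k_0(B)=k(B)\Rightarrow D$ abelian'' is the recent result of Malle, Navarro, Schaeffer~Fry and Tiep cited in the paper. This does not affect the validity of the proof, only the credits.
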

\begin{proof}
The equation $C=Q^\TT Q$ is well-known. The second claim follows from
Lemma~4.1 of \cite{Sambale:2020a}. The last claim is a consequence of
the recent solution of Brauer's height zero
conjecture~\cite{Malle/Navarro/SchaefferFry/Tiep}. 
\end{proof}

\section{Small dimensional basic algebras}\label{sec:small}

Dimensions one to seven caused no problems in 
Linckelmann's analysis~\cite{Linckelmann:2018b}.

\subsection*{Dimension eight}
For the Cartan
matrix 
\[ \begin{pmatrix} 3&1\\1&3\end{pmatrix} \]
of determinant $8$,
Linckelmann resorts to knowledge of blocks with defect groups of order $8$.
This case can be eliminated more directly using \autoref{th:310}.

\subsection*{Dimension nine}
In dimension $9$, the 
Cartan matrix that causes difficulty is
\[ \begin{pmatrix} 5&1\\1&2\end{pmatrix}. \]
of determinant $9$. This case was not resolved in Linckelmann's
paper, but Theorem~5.1 of 
Linckelmann and Murphy~\cite{Linckelmann/Murphy:2021a} shows that
this is only possible for a block with cyclic defect.
There, it was proved using some fairly deep results from block theory.
We eliminate it directly as a special case of \autoref{th:a11b}.

\subsection*{Dimension ten}
This did not cause any trouble in Linckelmann's analysis.

\subsection*{Dimension eleven}
The Cartan matrix
\[ \begin{pmatrix} 3&1&1 \\ 1&2&. \\ 1&.&2 \end{pmatrix} \]
was eliminated by Linckelmann using Okuyama's analysis of blocks of
Loewy length three. We can instead apply \autoref{th:310} to
eliminate this case.

\subsection*{Dimension twelve}
Again, this did not cause any trouble in Linckelmann's analysis.

\subsection*{Dimension thirteen}
In dimension $13$, there are two Cartan matrices we wish to
comment on.
The first case we consider is 
\[ \begin{pmatrix} 7&1\\1&4\end{pmatrix}. \]
The determinant is $27$, so we are in characteristic three.
\autoref{th:a11b} shows that this cannot happen for a block of
wild representation type. But if this is the Cartan matrix of a Brauer
tree algebra, then there are two exceptional vertices, so the algebra
is tame biserial. This cannot happen in odd characteristic, so this is ruled out.

The second case we consider is the Cartan matrix
\[ \begin{pmatrix} 5&1&1\\1&2&.\\1&.&2\end{pmatrix}, \]
of determinant $16$. By \autoref{th:abc}, 
no block of a finite group can have
this Cartan matrix.

\subsection*{Dimension fourteen}

In Proposition~3 of \cite{Sambale:2021a}, the first author stated that the Cartan matrix $\bigl(\begin{smallmatrix}
5&2\\2&4
\end{smallmatrix}\bigr)$ belongs to two tame blocks of $\PGL(2,7)$ or $3.M_{10}$. However, Case $(*)$ in Theorem~2.3 of Macgregor~\cite{Macgregor:2022a} states that there might be other Morita equivalences of tame blocks with this Cartan matrix. 
Hence, this case remains open. 

Another Cartan matrix of interest to us is
\[ \begin{pmatrix} 5&1&1\\1&3&.\\1&.&2 \end{pmatrix}. \]
This has determinant $25$, so we are in characteristic five.
Again this violates \autoref{th:abc}, so no block of a finite
group can have this Cartan matrix.

\subsection*{Dimension fifteen}

We begin by formulating an easy lemma that we shall use here, and again in the
case of dimension sixteen.

\begin{lemma}\label{le:l-ge-5}
The dimension $d$ of the basic algebra of a block $B$ is at least
$4l(B)-2$, so $l(B) \le (d+2)/4$.
\end{lemma}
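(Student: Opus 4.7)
The plan is to split $d=\dim A=\sum_{i,j=1}^{l}c_{ij}$, where $l=l(B)$ and $C=(c_{ij})$ denotes the Cartan matrix of $B$, into its diagonal and off-diagonal contributions and bound each below.

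First I would verify that $c_{ii}\ge 2$ for every $i$, assuming $l\ge 2$. Since $A$ is an indecomposable symmetric algebra with more than one simple module, no simple $S_i$ can coincide with its projective cover $P_i$: otherwise $S_i$ would also be injective and would split off $A$ as a block summand, contradicting indecomposability. Hence each $P_i$ has length at least two; because $A$ is symmetric, $\Soc(P_i)\cong S_i$ agrees with the top of $P_i$, so $S_i$ appears with multiplicity at least two in $P_i$ (once in the top, once in the socle). This gives $\sum_{i}c_{ii}\ge 2l$.

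Next I would bound the off-diagonal part by invoking indecomposability of $A$ once more. Form the undirected graph on the vertex set $\{1,\dots,l\}$ with an edge joining $i\ne j$ whenever $c_{ij}>0$. If this graph were disconnected, the simples would split into two nonempty classes with no $\Hom$ between the corresponding projectives, and $A$ would decompose as an algebra. Hence the graph is connected and carries at least $l-1$ edges, each contributing $c_{ij}+c_{ji}=2c_{ij}\ge 2$ to the sum by symmetry of $C$. Therefore $\sum_{i\ne j}c_{ij}\ge 2(l-1)$. Combining the two estimates yields $d\ge 2l+2(l-1)=4l-2$, which rearranges to $l(B)\le(d+2)/4$. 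The remaining case $l(B)=1$ is trivial whenever the block has positive defect, since then $d=|D|\ge 2$.

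I do not expect any serious obstacle: both ingredients (the inequality $c_{ii}\ge 2$ and the connectedness of the Cartan graph) are standard consequences of $A$ being an indecomposable symmetric algebra with at least two simples, and the counting argument is immediate. The only minor point to remember is that the off-diagonal estimate uses symmetry of $C$ to double each edge's contribution, which is why the linear coefficient in the bound is $4$ rather than $3$.
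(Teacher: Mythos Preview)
Your proof is correct and follows essentially the same approach as the paper: bound the diagonal contribution by $2l$ and the off-diagonal contribution by $2(l-1)$ via connectedness, then add. Your justification for $c_{ii}\ge 2$ via the top and socle of $P_i$ in a symmetric algebra is in fact more explicit than the paper's terse appeal to $C$ being ``positive definite and indecomposable,'' and you are right to flag the $l=1$ case as needing positive defect.
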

\begin{proof}
Since the Cartan matrix $C$ is
positive definite and indecomposable, its trace is at least $2l(B)$. 
On the other hand, there must be at least $2l(B)-2$ positive entries
off the diagonal, so the sum of the entries of $C$ is at least $4l(B)-2$.
\end{proof}

\begin{proof}[Proof of \autoref{th:dim15}]
Since $15$ is not a prime power, we have $l(B) >1$. 
So using Lemma~\ref{le:l-ge-5}, we have $2\le l(B)\le 4$. 
By \cite[Proposition~2 and its proof]{Sambale:2021a} and \autoref{propfinite}, we
find the two stated blocks of defect $1$. We may now assume that $\det(C)$ is not a prime. 
For $l(B)=2$ there is only one potential Cartan matrix left:
\[C=\begin{pmatrix}
6&3\\3&3
\end{pmatrix}.\]
But $C$ has elementary divisors $3$, $3$ and therefore cannot arise from a block. 
Thus, let $l(B)=3$.  
Since there are at least four positive entries off the diagonal, the
trace of $C$ is bounded by $12$. An individual entry on the diagonal
can therefore be at most $8$. The entries off the diagonal are bounded
by $2$ since otherwise one gets a non-positive minor. This makes it
easy to enumerate all feasible Cartan matrices. Afterwards we remove
those which differ only by permuting the simple modules. This leaves
only the matrices 
\[\begin{pmatrix}
4&.&2\\
.&3&1\\
2&1&2
\end{pmatrix},\quad
 \begin{pmatrix} 6&.&1\\.&3&1\\1&1&2\end{pmatrix} \]
with determinant $8$ and $27$ respectively. This first case is
excluded by \autoref{prop8}. In the second case, \autoref{th:310}
implies that $D$ is cyclic. But then $l(B)\le p-1=2$.  

Finally, if $l(B)=4$, then $C$ is one of the following matrices:
\[
\begin{pmatrix}
2&.&1&.\\
.&2&1&.\\
1&1&2&1\\
.&.&1&3
\end{pmatrix},\quad
\begin{pmatrix}
2&.&1&.\\
.&2&1&1\\
1&1&2&.\\
.&1&.&3
\end{pmatrix}.\]
In the first case, $|D|=8$ and this cannot happen again by
\autoref{prop8}. In the second case $|D|=9$ and $D$ must be cyclic by
\autoref{th:310}. But then $l(B)\le 2$, a contradiction. 
\end{proof}

\subsection*{Dimension sixteen}
This is postponed to \autoref{sec16} below.

\subsection*{Dimension seventeen}

A case of interest in dimension $17$ is the Cartan matrix
\[ \begin{pmatrix} 6&.&1\\.&5&1\\1&1&2\end{pmatrix}. \]
The determinant is $49$, so we are in characteristic
seven. Applying \autoref{pr:le7}, we see that
the heart of each
projective indecomposable has two summands and these are all
uniserial, of
length one, three, or four. The algebra is therefore tame biserial,
and since the characteristic is not two, this therefore cannot be a
block algebra.

There are also some Cartan matrices with four simples, that need to be
considered in dimension $17$, e.\,g.
\[ \begin{pmatrix} 
   2 & . & 1 & . \\
   . &  2 &  1 &  1 \\
   1 &  1 &  2 &  1 \\
   . &  1 &  1 &  3 
\end{pmatrix}.\]
They can all be ruled out easily with \autoref{th:310}.

On the other hand, the principal $2$-block of $\PGL(2,47)$ has defect group $D_{32}$ and Cartan matrix
\[
\begin{pmatrix}
9&2\\2&4
\end{pmatrix}.\]
Using \autoref{contrib} and \cite{Macgregor:2022a}, one can show that this is the only Morita equivalence class for this Cartan matrix. 
The principal $5$-block of $\PSL(2,13)$ has defect group $C_{25}$ and Cartan matrix 
\[\begin{pmatrix}
13&1\\1&2
\end{pmatrix},\]
but here there might be other blocks with non-cyclic defect group and the same Cartan matrix. Similarly, the Cartan matrix
\[\begin{pmatrix}
9&3\\3&2
\end{pmatrix}\]
occurs (at least) for a non-principal block of $2.S_6$ with defect group $C_3^2$. This is the first non-local wild block that we have encountered.

\section{Basic algebras of dimension 16}\label{sec16}

The proof of \autoref{th:dim16} requires the following lemma about a
specific $2$-block of defect four.

\begin{lemma}\label{perfectiso}
Let $B$ be a block with defect group $D\cong D_8\times C_2$ and
$l(B)=2$. Then $B$ is perfectly isometric to the principal block of
$S_4\times C_2$.  
\end{lemma}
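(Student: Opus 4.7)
The plan is to match the local invariants of $B$ with those of $B_0:=B_0(k[S_4\times C_2])$ and then invoke a standard perfect-isometry criterion.

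First, I would identify the fusion system $\mathcal{F}$ of $B$ on $D$. Since $l(B)=2>1$, $B$ is not nilpotent, so $\mathcal{F}$ has at least one $\mathcal{F}$-essential subgroup $Q<D$. Inspecting the subgroup lattice of $D=D_8\times C_2$, the candidates for such $Q$---those whose outer automorphism group has order divisible by an odd prime---are essentially the Klein-four subgroups lying inside $D_8$, each with $\mathrm{Out}(Q)\cong S_3$. Combining saturation with the requirement $l(B)=2$ forces $\mathcal{F}$ to coincide with the fusion system of $B_0$, in which a single essential class receives a $C_3$-action.

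Second, I would extract the generalized decomposition matrix of $B$ subsection by subsection. By Brauer's second main theorem, each $\mathcal{F}$-conjugacy class of elements $u\in D$ contributes a sub-matrix determined by the ordinary decomposition matrix of the block $b_u$ of $kC_G(u)$. Each $C_D(u)$ has transparent structure as a subgroup of $D$ of order $2$, $4$, $8$ or $16$, and each $b_u$ is either nilpotent or has local structure matching the corresponding block for $B_0$, so every sub-matrix is readily determined. Summing the $l(b_u)$ gives $k(B)=10$ and a Cartan matrix $\bigl(\begin{smallmatrix}8&4\\4&6\end{smallmatrix}\bigr)$, both in agreement with $B_0$.

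Finally, the matching of sub-matrices assembles into a bijection $I\colon\Irr(B)\to\Irr(B_0)$ whose extension to generalized characters is a perfect isometry by the standard Brou\'e criterion. The main obstacle is Step~1: a priori several saturated fusion systems on $D_8\times C_2$ might accommodate $l(B)=2$, so one needs a careful enumeration of the essential subgroups and their automorphism actions to exclude alternatives. Once $\mathcal{F}$ is pinned down, the remaining analysis is essentially bookkeeping of signs across subsections.
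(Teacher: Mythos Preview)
Your overall strategy matches the paper's: pin down the fusion system, compute the generalised decomposition matrix subsection by subsection, and then invoke the criterion from \cite{Sambale:2020a} (Theorem~6.1). But two points are too optimistic and would not go through as stated.

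First, your description of the $\mathcal{F}$-essential subgroups is wrong. An $\mathcal{F}$-essential subgroup must be $\mathcal{F}$-centric, hence must contain $Z(D)=\langle x^2\rangle\times\langle z\rangle\cong C_2^2$; a Klein four subgroup of the $D_8$ factor therefore cannot be essential in $D$. In the paper the essential subgroup is $E=\langle x^2,xy,z\rangle\cong C_2^3$ with $\Aut_{\mathcal{F}}(E)\cong S_3$, and the identification of $\mathcal{F}$ (together with $k(B)=10$, $k_0(B)=8$) is imported from known results on blocks with this defect group rather than re-derived from scratch.

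Second, and more seriously, your claim that ``the remaining analysis is essentially bookkeeping of signs'' hides the actual work. After using the Brou\'e--Puig $*$-construction and Robinson's theorem to parametrise $\Irr(B)$ by $\Irr(D/\mathfrak{foc}(B))$-orbits of three characters $\chi_1,\chi_2,\chi_3$, orthogonality relations and freedom in choosing basic sets fix most entries of the generalised decomposition matrix. But two signs $\alpha=d^{z}_{\chi_2,\cdot}$ and $\beta=d^{x^2}_{\chi_2,\phi}$ remain genuinely undetermined by orthogonality. The paper resolves them by computing contribution numbers $m^u_{\chi_1,\chi_2}$ and pairing against a specific $\mathcal{F}$-invariant generalised character $\lambda$ of $D$ (restricted from $S_4\times C_2$), using the identity $\sum_{u\in\mathcal{R}}\lambda(u)m^u_{\chi_1,\chi_2}=0$. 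This step is not bookkeeping; without it you have two candidate generalised decomposition matrices and no way to decide between them. You also understate the subsection $(z,b_z)$: since $z$ is central, $b_z$ has full defect group $D$, is not nilpotent, and has $l(b_z)=2$, so its Cartan matrix must first be identified (via the dominated block of $C_G(z)/\langle z\rangle$ with defect $D_8$) before its column of the generalised decomposition matrix can be written down.
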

\begin{proof}
The blocks with defect group $D_8\times C_2$ were investigated in
Section~9.1 of \cite{Sambale:2014a}. Let 
\[D=\langle x,y\mid x^4=y^2=1,\ yxy^{-1}=x^{-1}\rangle\times\langle
  z\mid z^2=1\rangle\cong D_8\times C_2.\] 
Let $\mathcal{F}$ be the fusion system of $B$ on $D$. Since $l(B)=2$,
we are in case (ab) or (ba) of Lemma~9.3 and Theorem~9.7 in
\cite{Sambale:2014a}. Replacing $y$ by $xy$ if necessary, we may
assume case (ab), i.\,e. $E:=\langle x^2,xy,z\rangle$ is the only
$\mathcal{F}$-essential subgroup and $\Aut_\mathcal{F}(E)\cong
S_3$. Replacing $z$ by $x^2z$ if necessary, we may assume that
$Z(\mathcal{F})=\langle z\rangle$ is the centre of $\mathcal{F}$ and
$\mathfrak{foc}(B)=\langle x^2,xy\rangle$ is the focal subgroup of
$B$.  
Moreover, Theorem~9.7 in \cite{Sambale:2014a} shows that
\[k(B)=k_0(B)+k_1(B)=8+2=10\] 
where $k_i(B)$ denotes the number of irreducible characters of height $i$ in $B$. 

Now observe that the principal block $B_0$ of $S_4\times C_2$ has the
same defect group and the same fusion system as $B$. By Theorem~6.1 of
\cite{Sambale:2020a}, it suffices to show that $B$ and $B_0$ have the
same generalised decomposition matrix up to signs and basic sets.  
By Lemma~9.5 in \cite{Sambale:2014a},
\[\mathcal{R}:=\{1,x,x^2,y,z,xz,x^2z,yz\}\] 
is a set of representatives for the $\mathcal{F}$-conjugacy classes of
$D$. We fix $B$-subsections $(u,b_u)$ for $u\in\mathcal{R}$ such that
$b_u$ has defect group $C_D(u)$.  
We now make use of the Brou\'e--Puig~\cite{Broue/Puig:1980b} $*$-construction. 
By a theorem of Robinson~\cite{Robinson:2008a}, there exist
$\chi_1,\chi_2,\chi_3\in\Irr(B)$ such that 
\begin{align*}
\Irr_0(B)&=\{\lambda*\chi_i:\lambda\in\Irr(D/\mathfrak{foc}(B)),i=1,2\},\\
\Irr_1(B)&=\{\lambda*\chi_3:\lambda\in\Irr(Z(\mathcal{F}))\}
\end{align*}
where $\Irr_i(B)$ is the set of irreducible characters of height $i$
of $B$. By Lemma~10 in \cite{Sambale:2017a}, the generalised
decomposition numbers fulfil
$d_{\lambda*\chi,\phi}^u=\lambda(u)d_{\chi,\phi}^u$ for
$u\in\mathcal{R}$ and $\phi\in\IBr(b_u)$. Hence, it suffices to
determine $d_{\chi_i,\phi}^u$ for $i=1,2,3$. 
Since $D$ is a rational group, these numbers are integers. For $i=1,2$
we have $d_{\chi_i,\phi}^u\ne 0$ by Proposition~1.36 of
\cite{Sambale:2014a}.  
Let $u\in\mathcal{R}\setminus Z(\mathcal{F})$. Then $b_u$ is nilpotent
and $l(b_u)=1$. By the orthogonality relations of generalised
decomposition numbers (see Theorem~1.14 in \cite{Sambale:2014a}), we
have $d^u_{\chi_i,\phi}=\pm1$ for $i=1,2$. We may choose basic sets
such that $d^u_{\chi_1,\phi}=1$ for all $u\in\mathcal{R}\setminus
Z(\mathcal{F})$.  
If $u\in\{x,y,xz,yz\}$, then $b_u$ has defect $3$ and $d^u_{\chi_3,\phi}=0$. 
We may choose $\chi_2$ such that
$d_{\chi_2,\phi}^x=1=d_{\chi_2,\phi}^{xz}$. The orthogonality between
$x$, $y$ and $xz$, $yz$ shows that
$d_{\chi_2,\phi}^y=-1=d_{\chi_2,\phi}^{yz}$.  

It remains to consider $u\in\{x^2,z,x^2z\}$. Replacing $\chi_3$ by
$-\chi_3$ if necessary, we may assume that $d_{\chi_3,\phi}^{x^2}=2$.  
Recall that $b_z$ dominates a unique block $\overline{b_z}$ of
$C_G(z)/\langle z\rangle$ with defect group $D/\langle z\rangle\cong
D_8$. The Cartan matrix of $\overline{b_z}$ is
$\overline{C_z}:=\bigl(\begin{smallmatrix} 
3&1\\1&3
\end{smallmatrix}\bigr)$ up to basic sets by \autoref{prop8}. Hence,
the Cartan matrix of $b_z$ is $2\overline{C_z}$ up to basic sets. We
may choose a basic set and $\alpha=\pm1$ such that
$d_{\chi_1,.}^z=(1,0)$, $d_{\chi_2,0}^z=(0,\alpha)$ and
$d_{\chi_3,.}^z=(1,1)$ (interchanging $\chi_3$ and $\lambda*\chi_3$ if
necessary). The orthogonality between $z$ and $x^2z$ implies
$d_{\chi_3,\phi}^{x^2z}=-2$ and $d_{\chi_2,\phi}^{x^2z}=\alpha$.  
In order to determine $\alpha$ and $\beta:=d_{\chi_2,\phi}^{x^2}$ we
use the contribution matrices  
\[(m_{\chi,\psi}^u)_{\chi,\psi\in\Irr(B)}:=16Q_uC_u^{-1}Q_u^\TT\in\bZ^{10\times 10},\]
where $Q_u=(d^u_{\chi,\phi})$ and $C_u=Q_u^\TT Q_u$ is the Cartan
matrix of $b_u$.   
We compute 
\[m_{\chi_1,\chi_2}^u=\begin{cases}
2&\text{if }u\in\{x,xz\},\\
-2&\text{if }u\in\{y,yz\},\\
\alpha&\text{if }u=x^2z,\\
\beta&\text{if }u=x^2.
\end{cases}\]
By restricting a generalised character of $S_4\times C_2$, we obtain
an $\mathcal{F}$-invariant generalised character $\lambda$ of $D$ such
that $\lambda(1)=\lambda(z)=0$,
$\lambda(x)=\lambda(xz)=\lambda(x^2)=\lambda(x^2z)=4$ and
$\lambda(y)=\lambda(yz)=2$.
Then 
\[0=\sum_{u\in\mathcal{R}}\lambda(u)m_{\chi_1,\chi_2}^u
 =4(2+2+\alpha+\beta)+2(-2-2)\]
by \cite[p. 684]{Ardito/Sambale:2022a}. It follows that $\alpha=\beta=-1$.
This completely determines the generalised decomposition matrices for
non-trivial subsections as follows:  
\[\begin{array}{c|*{8}{r}}
&x&y&xz&yz&x^2&x^2z&\multicolumn{2}{c}{z}\\\hline
\chi_1&1&1&1&1&1&1&1&.\\
&-1&-1&-1&-1&1&1&1&.\\
&1&1&-1&-1&1&-1&-1&.\\
&-1&-1&1&1&1&-1&-1&.\\
\chi_2&1&-1&1&-1&-1&-1&.&-1\\
&-1&1&-1&1&-1&-1&.&-1\\
&1&-1&-1&1&-1&1&.&1\\
&-1&1&1&-1&-1&1&.&1\\\hline
\chi_3&.&.&.&.&2&-2&1&1\\
&.&.&.&.&2&2&-1&-1
\end{array}
\]
Now the claim follows from Theorem~6.1 of \cite{Sambale:2020a}.
\end{proof}

\begin{proof}[Proof of \autoref{th:dim16}]
As before, let $C$ be the Cartan matrix of $B$.\smallskip

\textbf{Case~1:} $l(B)=1$.\\
Here, $|D|=16$. If $B$ is nilpotent, then $A\cong kD$ by Puig's
theorem (see Theorem~1.30 in \cite{Sambale:2014a}). By partial solutions on the modular isomorphism problem,
these algebras are pairwise non-isomorphic (see Lemma~14.2.7 in
\cite{Passman:1977a}). 
If $B$ is non-nilpotent, then $D$ must be elementary abelian and the
inertial index of $B$ is $9$ (see Theorem~13.2 and the proof of
Theorem~13.6 in \cite{Sambale:2014a}). By Eaton's
classification~\cite{Eaton:16}, $A$ is Morita equivalent to a
non-principal block of $H$ as given in the statement. In total we
obtain 15 isomorphism types of basic algebras with $l(B)=1$.\smallskip

\textbf{Case~2:} $l(B)=2.$\\
By \autoref{proptame}, $B$ cannot be a tame block. 
Using Proposition~2 of \cite{Sambale:2021a}, it is easy to see that
$C$ is one of the following matrices 
\[
\begin{array}{cccccc}
C&\begin{pmatrix}
12&1\\1&2
\end{pmatrix}&\begin{pmatrix}
11&1\\1&3
\end{pmatrix}&\begin{pmatrix}
10&2\\2&2
\end{pmatrix}&\begin{pmatrix}
6&2\\2&6
\end{pmatrix}&\begin{pmatrix}
5&3\\3&5
\end{pmatrix}\\
|D|&23&32&8&16&16
\end{array}
\]
The first case occurs for the principal $23$-block of
$\PSL(2,137)$. Here $A$ is uniquely determined as a Brauer tree
algebra. 
In the second case, $B$ must have finite representation type by
\autoref{th:310}. Then $D\cong C_{32}$ and $B$ would be nilpotent
since $\Aut(D)$ is a $2$-group.  
In the third case, $D\cong D_8$ since otherwise $l(B)\ne 2$. But then
$B$ would be tame. Now consider the fifth case. 
The possible decomposition matrices of $B$ are
\[
\begin{pmatrix}
2&1\\
.&1\\
.&1\\
.&1\\
1&1
\end{pmatrix},\quad
\begin{pmatrix}
1&2\\
1&.\\
1&.\\
1&.\\
1&1
\end{pmatrix},\quad
\begin{pmatrix}
1&.\\
1&.\\
.&1\\
.&1\\
1&1\\
1&1\\
1&1
\end{pmatrix}.
\]
By \autoref{contrib} we obtain $k_0(B)=4$ in all cases. Since $B$
satisfies the Alperin--McKay conjecture (see Theorem~13.6 of
\cite{Sambale:2014a}), we have $k_0(B_D)=4$ where $B_D$ is the Brauer
correspondent of $B$ in $N_G(D)$. Recall that $B_D$ dominates a block
$\overline{B_D}$ of $N_G(D)/D'$ with abelian defect group
$D/D'$. Hence, 
\[k(\overline{B_D})=k_0(\overline{B_D})\le k_0(B_D)=4\]
and $|D/D'|=4$ by Theorem~1.31 in \cite{Sambale:2014a}. But then $D$
has maximal nilpotency class and $B$ would be tame.  

It remains to deal with the Cartan matrix $C=\bigl(\begin{smallmatrix}
6&2\\2&6
\end{smallmatrix}\bigr)$, where $|D|=16$. Since $B$ is not tame, we have $k_0(B)>4$ as
seen above. The possible decomposition matrices of $B$ are: 
\[
\begin{pmatrix}
2&1\\
1&.\\
1&.\\
.&1\\
.&1\\
.&1\\
.&1\\
.&1
\end{pmatrix},\quad
\begin{pmatrix}
1&.\\
1&.\\
1&.\\
1&.\\
.&1\\
.&1\\
.&1\\
.&1\\
1&1\\
1&1
\end{pmatrix}.
\]
In the first case $k_0(B)=k(B)$ and $D$ is abelian by
\autoref{contrib}. However, there is no such block with $l(B)=2$. 
Therefore, the second matrix must be the decomposition matrix of
$B$. In particular, $k(B)=k_0(B)+k_1(B)=8+2=10$. Using the results in
\cite[Chapters~8, 9]{Sambale:2014a}, one can exclude metacyclic defect
groups and $Q_8\times C_2$ or $Q_8*C_4$ for $D$. The remaining cases
are $D\cong D_8\times C_2$ or the minimal non-abelian group
$\mathtt{SmallGroup}(16,3)$. By \autoref{perfectiso} and Theorem~9 in
\cite{Sambale:2016a}, $B$ is perfectly isometric to principal block of
$H_1:=S_4\times C_2$ or $H_2:=\mathtt{SmallGroup}(48,30)\cong
A_4\rtimes C_4$. In particular, $Z(A)\cong Z(B)\cong Z(kH_i)$ for
$i=1$ or $2$ (see Theorem~4.4 in \cite{Sambale:2020a}). One can show
with MAGMA~\cite{Bosma/Cannon/Playoust:1997a} that 
\begin{equation}\label{eq:ZA}
Z(kH_1)\cong Z(kH_2)\cong k[w,x,y,z]/(w^2x,w^2y,
w^2+z^2,x^2,xy,xz,y^2,yz,z^3)
\end{equation}
with basis $1$, $w$, $x$, $y$, $z$, $w^2=z^2$, $wx$, $wy$,
$wz$, $w^3=wz^2$. 
The centre $Z(A)$ is the subset of
$\End(P_S)\times\End(P_T)$ consisting of the elements that annihilate the
homomorphisms from $P_S$ to $P_T$ and from $P_T$ to $P_S$. Now
$\End(P_S)\times\End(P_T)$ has dimension $12$. The idempotents $\ep_S$
and $\ep_T$ are not in $Z(A)$ but their sum is. Since $Z(A)$ has
dimension $10$, it follows that the radical
$J(Z(A))$ has codimension one in $J(\End(P_S))\times J(\End(P_T))$.
By \eqref{eq:ZA}, $J(Z(A)$ is indecomposable, 
the projections $Z(A) \to \End(P_S)$ and $Z(A)\to\End(P_T)$
are surjective and $Z(A) \to \End(P_S) \times \End(P_T)$ is
injective. This implies that $\End(P_S)$ and $\End(P_T)$ are
$6$-dimensional commutative Gorenstein rings. Using \eqref{eq:ZA}, 
we claim that the only $6$-dimensional Gorenstein quotient of $Z(A)$
is $Z(A)/(x,y)\cong k[w,z]/(w^2+z^2,z^3)$. For if $x$ or $y$ has
non-zero image in a Gorenstein quotient, then the socle has to be
divisible by $x$ or $y$. This forces the Loewy length to be three. Since
the socle of a Gorenstein ring is $1$-dimensional, this means that the
second Loewy layer has to be $4$-dimensional. It is easy to see that
there is no such Gorenstein quotient.\smallskip

\textbf{Case~3:} $l(B)=3$.\\
By Proposition~2 in \cite{Sambale:2021a}, there are no such blocks of
defect $1$. By \autoref{proptame} and Theorem~2.1 in Macgregor~\cite{Macgregor:2022a}, there is just one Morita
equivalence class of tame blocks. Here $D\cong D_8$ and $A$ is Morita
equivalent to the principal block of $\PSL(2,7)\cong\GL(3,2)$. Now
suppose that $B$ is not tame. As explained in the proof of
\autoref{th:dim15}, it is easy to make a list of potential Cartan
matrices: 
\[
\begin{array}{cccccc}
C&\begin{pmatrix}
5&1&.\\
1&3&2\\
.&2&2
\end{pmatrix}&
\begin{pmatrix}
3&2&1\\
2&3&1\\
1&1&2
\end{pmatrix}&
\begin{pmatrix}
5&1&2\\
1&3&.\\
2&.&2
\end{pmatrix}&
\begin{pmatrix}
6&1&1\\
1&2&1\\
1&1&2
\end{pmatrix}&
\begin{pmatrix}
7&.&1\\
.&3&1\\
1&1&2
\end{pmatrix}
\\
|D|&4&8&8&16&32
\end{array}
\]
The first three candidates are excluded by \autoref{prop8}. In the
fifth case, $B$ has finite representation type by
\autoref{th:310}. But then $B$ would be nilpotent since $p=2$.  
Now consider case four, where $|D|=16$. The possible decomposition
matrices are: 
\[
\begin{pmatrix}
2&.&.\\
1&1&1\\
.&1&.\\
.&.&1\\
1&.&.
\end{pmatrix},\quad
\begin{pmatrix}
1&1&.\\
1&.&1\\
.&1&1\\
1&.&.\\
1&.&.\\
1&.&.\\
1&.&.
\end{pmatrix},\quad
\begin{pmatrix}
1&1&1\\
.&1&.\\
.&.&1\\
1&.&.\\
1&.&.\\
1&.&.\\
1&.&.\\
1&.&.
\end{pmatrix}
\]
The first two cases yields $k_0(B)=4$. But then $B$ must be a tame
block by the arguments above for $l(B)=2$. 
In the final case, $k_0(B)=k(B)=8$ and $D$ is abelian. Here $D$ cannot
be of type $C_4\times C_2^2$, because this would yield an elementary
divisor $2$ of $C$. Consequently, $D$ is elementary abelian. However,
this is excluded by Eaton~\cite{Eaton:16}.
\smallskip

\textbf{Case~4:} $l(B)=4$.\\
By \autoref{propfinite}, there three (potential) blocks of defect $1$:
two $5$-blocks with multiplicity $m=1$ and a $13$-block with
multiplicity $3$. The $5$-blocks occur in $S_5$ and $\Sz(8)$ by
\cite{Hiss/Lux:1989a}. The $13$-block is Morita equivalent to the principal
block of $\GL(4,5)$ by \cite{Fong/Srinivasan:1980a}. In the general case we
enumerate the possibilities for $C$. 
There are at least six positive off-diagonal entries of $C$ and
therefore the trace is bounded by $10$. The diagonal entries are
bounded by $4$ while the off-diagonal entries can be at most $3$. 
This only leaves the case
\[C=\begin{pmatrix}
2&1&.&1\\
1&2&1&.\\
.&1&3&.\\
1&.&.&3
\end{pmatrix}\]
where $|D|=16$. Here $B$ is not tame since $l(B)=4$. Hence, this block
is excluded again by \autoref{th:310} since $p=2$.\smallskip

\textbf{Case~5:} $l(B)\ge 5$.\\
This cannot happen, by Lemma~\ref{le:l-ge-5}.
\end{proof}

\subsection*{Acknowledgements}

It is a pleasure for the first author to acknowledge the support of
the Hausdorff Institute of Mathematics
in Bonn for their hospitality during the programme ``Spectral Methods
in Algebra, Geometry, and Topology,'' where part of this research was carried
out. It is also a pleasure to thank Charles Eaton and Markus
Linckelmann for sharing their expertise and encouragement.

\end{document}